\documentclass[12pt,a4paper]{amsart} %
\usepackage[
  a4paper,
  top=2.8cm,
  bottom=3.5cm,
  left=2.2cm,
  right=2.2cm,
  heightrounded
]{geometry}
\usepackage{amsmath,amsfonts,amssymb,amsthm}
\usepackage{comment}
\usepackage{todonotes}

\usepackage[colorlinks=true,linkcolor=blue,citecolor=blue,urlcolor=blue]{hyperref}
\usepackage[skip=0.8em plus 0.2em minus 0.1em]{parskip}
\allowdisplaybreaks

\theoremstyle{plain}
\newtheorem{question}{Question}

\newtheorem{theorem}{Theorem}

\newtheorem{corollary}{Corollary}
\newtheorem{lemma}{Lemma} 

\newtheorem{theoremx}{Theorem}

\newtheorem{definitionx}{Definition}

\newtheorem{lemmax}{Lemma}

\theoremstyle{remark}
\newtheorem{remark}{Remark}

\title{Discrete Pauli pairs}
\author{Torgeir Keun Lysen}
\email{torgeir.lysen@gmail.com}

\begin{document}
\begin{abstract}
    We determine sharp density thresholds for when discrete Pauli pairs with Gaussian decay must be classical Pauli pairs. More precisely, we identify when equality of the sampled moduli of two functions and their Fourier transforms forces equality of the global moduli in time and frequency, thereby answering a question of Ramos and Sousa. We also determine the sharp threshold for when discrete Pauli pairs must be weak Pauli pairs, where either the time-side or the frequency-side moduli agree identically. These can both be seen as phaseless versions of the results of Kulikov, Nazarov, and Sodin on Fourier uniqueness pairs.
\end{abstract}
\maketitle

\section{Introduction}
The Pauli problem asks to what extent a function is determined by the modulus of the function and the modulus of its Fourier transform. This problem was motivated by problems in quantum mechanics~\cite{pauli1933wellenmechanik}. More broadly, it is an instance of phase retrieval, where one seeks to reconstruct an object from phaseless intensity measurements. This kind of problem has gained substantial interest due to its applications in radiography~\cite{allman}, tomography~\cite{seelamantula}, X-ray crystallography~\cite{Elser,millane,Thibault}, microscopy and diffraction imaging~\cite{Drenth,Miao,Shechtman}, quantum mechanics~\cite{ORLOWSKI,Raymer}, data analysis~\cite{Mallat}, and other fields. For a mathematical survey of the topic see~\cite{grohs2020}.

In this paper we study a sampled version of the problem: when is equality of these two moduli on discrete sets enough to force equality of the global moduli? The problem is motivated by a question of W. Pauli, where the probability density of a particle was known only at countably many spatial points at two different times. Note that it is impossible for us to recover the phase under these conditions as there are infinitely many functions satisfying $|f(x)|\equiv|g(x)|$ and $|\hat{f}(\xi)|\equiv|\hat{g}(\xi)|$ without having $f(x)\equiv e^{i\vartheta}g(x)$ for some $\vartheta\in\mathbb{R}$. We use the following terminology to distinguish the global problem from its sampled analogue.
\begin{definitionx}[Pauli pair, weak Pauli pair, and discrete Pauli pair] Let $f,g:\mathbb{R}\to\mathbb{C}$.
    \begin{enumerate}
        \item[(i)] We say that $f$ and $g$ form a \emph{Pauli pair} if both $|f(x)|=|g(x)|$ almost everywhere and $|\hat f(\xi)|=|\hat g(\xi)|$ almost everywhere.
        \item[(ii)] We say that $f$ and $g$ form a \emph{weak Pauli pair} if $|f(x)|=|g(x)|$ almost everywhere or $|\hat f(\xi)|=|\hat g(\xi)|$ almost everywhere.
        \item[(iii)] We say that $f$ and $g$ form a \emph{discrete Pauli pair} for a pair of discrete sets $\Lambda,M\subset\mathbb{R}$ if $|f(\lambda)|=|g(\lambda)|$ for all $\lambda\in\Lambda$ and $|\hat{f}(\mu)|=|\hat{g}(\mu)|$ for all $\mu\in M$.
    \end{enumerate}
\end{definitionx}
We use the following normalization of the Fourier transform:
\[
\hat{f}(\xi):=\int_\mathbb{R}f(x)e^{-2\pi ix\xi}\,dx.
\]
The sampled Pauli problem can be seen as a phaseless version of the Fourier uniqueness problem, which has been studied extensively~\cite{adve2023density,kulikov2025,lysen2025criticalasymmetricfourieruniqueness,ramos2022fourier} since the work of Radchenko and Viazovska on Fourier interpolation~\cite{Radchenko2019}. The study of Fourier uniqueness pairs can be viewed as a discrete form of the uncertainty principle.
\begin{definitionx}[Fourier uniqueness pair]\label{def:fourier-uniqueness-pair}
    We call a pair $(\Lambda, M)$ with $\Lambda,M\subset \mathbb{R}$ a \emph{Fourier uniqueness pair} for a space $X\subset L^1(\mathbb{R})\cap L^2(\mathbb{R})$ if, for every $f\in X$,
    \[
    f|_\Lambda=0\quad\text{and}\quad \hat{f}|_M=0\quad\Longrightarrow\quad f\equiv 0.
    \]
    A pair that is not a \emph{Fourier uniqueness pair} is called a \emph{Fourier non-uniqueness pair}.
\end{definitionx}
We particularly note a result of Kulikov, Nazarov, and Sodin, who identified a broad class of Fourier uniqueness and non-uniqueness pairs called supercritical and subcritical pairs~\cite{kulikov2025}.

\begin{definitionx}[Supercritical and subcritical pairs]\label{def:density-criticality}
Given $1<p,q<\infty$ with $\frac{1}{p}+\frac{1}{q}=1$, we call a pair $(\Lambda, M)$ \emph{supercritical} if
\begin{align*}
        \limsup_{j\to\pm\infty}|\lambda_j|^{p-1}(\lambda_{j+1}-\lambda_j)\leq\overline{\alpha}\quad\text{and}\quad\limsup_{j\to\pm\infty}|\mu_j|^{q-1}(\mu_{j+1}-\mu_j)\leq \overline{\beta}
\end{align*}
with $\overline{\alpha}^\frac{1}{p}\overline{\beta}^\frac{1}{q}< \frac{1}{2}$. Conversely, the pair is \emph{subcritical} if there exists $\underline{\alpha}^\frac{1}{p}\underline{\beta}^\frac{1}{q}> \frac{1}{2}$ such that
\begin{align*}
        \liminf_{j\to\pm\infty}|\lambda_j|^{p-1}(\lambda_{j+1}-\lambda_j)\geq\underline{\alpha}\quad\text{and}\quad\liminf_{j\to\pm\infty}|\mu_j|^{q-1}(\mu_{j+1}-\mu_j)\geq \underline{\beta}.
\end{align*}
\end{definitionx}
Unless stated otherwise, we assume that the sequences $\Lambda,M\subset\mathbb{R}$ are ordered and unbounded in both directions.

We use $\mathcal{S}$ to denote the Schwartz space and $\mathcal{H}$ to denote the Fourier-symmetric Sobolev space
\[
\mathcal{H}:=\left\{f\in L^2(\mathbb{R}): \int_\mathbb{R}(1+x^2)|f(x)|^2\,dx+\int_\mathbb{R}(1+\xi^2)|\hat{f}(\xi)|^2\,d\xi<\infty\right\}.
\]
\begin{theoremx}[Kulikov--Nazarov--Sodin~\cite{kulikov2025}]\label{thm:kns-uniqueness}
    \textit{Suppose that $1 < p, q < \infty$, $\tfrac{1}{p} + \tfrac{1}{q} = 1$. Then}
\begin{itemize}
    \item[(i)] \textit{any supercritical pair $(\Lambda, M)$ is a uniqueness pair for $\mathcal{H}$;}
    \item[(ii)] \textit{any subcritical pair $(\Lambda, M)$ is a non-uniqueness pair for $\mathcal{S}$.}
\end{itemize}
\end{theoremx}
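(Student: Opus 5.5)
The approach is to treat the two parts separately. Part (i) becomes a chain of weighted $L^2$ inequalities closed by a sharp uncertainty principle. Part (ii) becomes a construction of a Schwartz function through entire functions of finite order whose zero sets are prescribed.

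\textbf{Part (i): local Poincaré bounds on both sides.} Let $f\in\mathcal{H}$ with $f|_\Lambda=0$ and $\hat f|_M=0$. On each gap $[\lambda_j,\lambda_{j+1}]$, $f$ vanishes at both endpoints, so the Wirtinger inequality gives
\[
\int_{\lambda_j}^{\lambda_{j+1}}|f|^2\le \Big(\tfrac{\lambda_{j+1}-\lambda_j}{\pi}\Big)^2\int_{\lambda_j}^{\lambda_{j+1}}|f'|^2 .
\]
Multiply by $|x|^{2(p-1)}$, which is essentially constant on the gap for large $|j|$. Use $|\lambda_j|^{p-1}(\lambda_{j+1}-\lambda_j)\le\overline\alpha+\varepsilon$ and sum over $|j|\ge J$. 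This yields
\[
\int |x|^{2(p-1)}|f|^2\le \Big(\tfrac{\overline\alpha+\varepsilon}{\pi}\Big)^2\int|f'|^2+C_J\|f\|_{L^2(K)}^2
\]
for some compact $K$. We have $\int|f'|^2=4\pi^2\int\xi^2|\hat f|^2$. The same argument on the Fourier side with $M$ and $q$ gives the symmetric inequality with $\overline\beta$.

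\textbf{Part (i): closing the chain.} Let $A=\int|x|^{2(p-1)}|f|^2$ and $B=\int|\xi|^{2(q-1)}|\hat f|^2$. Hölder and interpolation against $\|f\|_2$ (using $\frac1p+\frac1q=1$) should bound $\int\xi^2|\hat f|^2$ by a combination of $B$ and $\|f\|_2^2$, and similarly $\int x^2|f|^2$ by $A$ and $\|f\|_2^2$. The comparison with $\|f\|_2^2$ is the sharp weighted Heisenberg inequality
\[
\|f\|_2^2\le 2\,\big\||x|^{p-1}f\big\|_2^{1/p}\cdot\big\||\xi|^{q-1}\hat f\big\|_2^{1/q}\cdot(\ldots),
\]
with its optimal constant, which is where $\tfrac12$ enters. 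Chaining these gives $A^{1/p}B^{1/q}\le 4\,\overline\alpha^{2/p}\overline\beta^{2/q}\,A^{1/p}B^{1/q}+(\text{compact terms})$. Since $\overline\alpha^{1/p}\overline\beta^{1/q}<\tfrac12$, the principal term absorbs, and the compactly supported remainders control everything. Those remainders are the main obstacle, since they cannot simply be absorbed. I would first try to dilate $f$ so that the compact set shrinks relative to the scale. If that fails, I would argue by contradiction: the inequalities force $f$ into a finite-dimensional space of functions with a fixed zero pattern. Then I would use that $f$ vanishes on the infinite set $\Lambda\setminus K$ together with real-analyticity (from the Fourier decay) to conclude $f\equiv0$.

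\textbf{Part (ii): construction.} For a subcritical pair, first build an entire function $F$ of order $p$ vanishing exactly on $\Lambda$, for instance a canonical product or Weierstrass-type product. Its counting function $n(r)\approx r^p/(p\underline\alpha)$ gives $|F(x+iy)|\lesssim e^{c|y|\,|x|^{p-1}}$ with $c$ tied to $1/\underline\alpha$. Do the same on the Fourier side with $G$ vanishing on $M$. Then seek $f$ of the form $F\cdot\phi$, where $\phi$ is a Gaussian-type damping factor adapted to order $p$ that keeps $f$ in $\mathcal{S}$. Impose $\hat f|_M=0$ by solving a linear problem: subtract a correction from a Fourier interpolation-type basis of functions vanishing on $\Lambda$. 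Its convergence uses the slack $\underline\alpha^{1/p}\underline\beta^{1/q}>\tfrac12$, via a Phragmén–Lindelöf bound showing the correction is smaller than $f$. The delicate step is the growth estimate for the product along the real axis versus its Fourier decay, namely matching the constant $\tfrac12$ exactly.
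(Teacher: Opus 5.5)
The paper does not prove this statement. It quotes it from Kulikov--Nazarov--Sodin and uses it as a black box, so your proposal has to stand on its own. As written, it has genuine gaps in both parts.

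In part (i), the closing step fails for $p\neq2$. Write $X=\int x^2|f|^2$ and $Y=\int\xi^2|\hat f|^2$. Modulo compact terms, your Wirtinger step gives $A\le 4\overline\alpha^2Y$ and $B\le4\overline\beta^2X$. To chain these you need to bound $Y$ by $B$ \emph{and} $X$ by $A$.
\begin{itemize}
\item For $1<p<2$, H\"older does give $Y\le B^{p-1}\|f\|_2^{2(2-p)}$. But $X$ is a higher moment than $A$, and no interpolation bounds it by $A$ and $\|f\|_2$. For $p>2$ the roles swap.
\item Formally raising $|x|^{p-1}\lesssim2\overline\alpha|\xi|$ to the power $q-1$ would give exactly $2\overline\alpha^{1/p}\overline\beta^{1/q}<1$. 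However, that inequality holds only as a quadratic form on the non-invariant subspace $\{f|_\Lambda=0\}$, so nothing like operator monotonicity is available.
\item For $p=2$ the weights match, and $X\le16\overline\alpha^2\overline\beta^2X+\cdots$ already gives the sharp threshold. The $\tfrac12$ comes from the Wirtinger constant $1/\pi$ together with $\|f'\|_2=2\pi\|\xi\hat f\|_2$, not from a Heisenberg inequality.
\end{itemize}
More seriously, even for $p=2$ the compact remainder is the real difficulty, not a technicality. The resulting bound $X+Y\le C\|f\|_2^2$ is satisfied by many nonzero functions, for instance low Hermite functions once $C$ is large. So by itself it yields at most, via the compact embedding $\mathcal H\hookrightarrow L^2$, that the solution space is finite-dimensional. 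The constant $C$ reflects the finitely many sample points in a compact set, which supercriticality does not control at all. Dilation does not shrink it, because the problem is scale-invariant. Your proposed finish also fails. A function in $\mathcal H$ has no Fourier decay beyond $\xi\hat f\in L^2$, hence no real-analyticity. Even a real-analytic function can vanish on a discrete set escaping to infinity without vanishing identically, e.g.\ $\sin\pi x$ on $\mathbb Z$. What is missing is a global argument that is insensitive to finitely many zeros.

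In part (ii), the outline points the right way: it is the scheme the paper adapts in Lemma~\ref{lem:simultaneous-weighted-interpolation} and Theorem~\ref{thm:uniqueness-gelfand-shilov}(ii). But the decisive steps are deferred.
\begin{itemize}
\item For general $p$ there is no elementary damping factor like $e^{-a\pi z^2}$. You need a generating function with prescribed, regularly distributed zeros on both the real and imaginary axes, after first augmenting $\Lambda$ to a smooth superset. Its full Phragm\'en--Lindel\"of indicator must be known, not just $|F(x+iy)|$ near the real axis.
\item You then need an estimate in the spirit of Lemma~\ref{lem:fourier-decay-bound}: the Fourier transforms of the time-side interpolating functions must decay strictly faster than the weight on $M$, and symmetrically on the frequency side. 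This is precisely where subcriticality enters and the constant $\tfrac12$ is matched, and you leave it open.
\item Your correction step is circular. Functions vanishing on $\Lambda$ with prescribed Fourier values on $M$ are exactly what simultaneous interpolation must produce. The working version uses two families $\Phi_\lambda,\Psi_\mu$ and an alternating iteration that contracts only on the tails $|\lambda|,|\mu|>L$.
\item That iteration must be followed by a finite-dimensional linear-algebra step, using auxiliary points, which imposes the finitely many remaining conditions while guaranteeing $f\not\equiv0$. Neither this step nor the nontriviality of your output is addressed.
\end{itemize}
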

The connection with Fourier uniqueness enters the sampled Pauli problem through the following result of Ramos and Sousa.

\begin{theoremx}[Theorem~2(I) in~\cite{ramos2025pauli}]\label{thm:ramos-pauli}
     Fix any $A>0$. There exists a $\mathfrak{c}_A>0$ such that the following holds. Assume that $(\Lambda,M)$ satisfies
        \begin{equation*}
            \max\left\{\limsup_{j\to\pm\infty}|\lambda_j|(\lambda_{j+1}-\lambda_j),\,\limsup_{j\to\pm\infty}|\mu_j|(\mu_{j+1}-\mu_j)\right\}<\frac{1}{\mathfrak{c}_A}.
        \end{equation*}
        Let $f,g\in\mathcal{H}$ and assume that $g$ satisfies
        \[
        \sup_{x\in\mathbb{R}}|g(x)|e^{A\pi x^2}+\sup_{\xi\in\mathbb{R}}|\hat{g}(\xi)|e^{A\pi\xi^2}<\infty.
        \]
        Then $f$ and $g$ form a Pauli pair if and only if they form a discrete Pauli pair for $(\Lambda,M)$.
\end{theoremx}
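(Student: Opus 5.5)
The plan is to reduce the phaseless statement to the Fourier uniqueness result, Theorem~\ref{thm:kns-uniqueness}(i), with $p=q=2$. The natural objects are the intensity differences
\[
h := |f|^2-|g|^2,\qquad \hat{h}\ \text{ and }\ k:=|\hat f|^2-|\hat g|^2 .
\]
These are not a function and its own Fourier transform. However, $|f|^2$ has Fourier transform $\hat f * \overline{\hat f(-\,\cdot)}$, the autocorrelation of $\hat f$. So a discrete Pauli pair gives vanishing of $h$ on $\Lambda$, and vanishing of the frequency-side intensity difference on $M$, but not of $\hat h$ on $M$.

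Following Ramos and Sousa, we use instead a Wigner/ambiguity-type auxiliary function. Set $F := f\otimes \bar f - g\otimes \bar g$, or more simply $\Phi(x) := |f(x)|^2 - |g(x)|^2$ paired with $\Psi(\xi) := |\hat f(\xi)|^2 - |\hat g(\xi)|^2$.

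\textbf{Step 1 (direct direction).} The "only if" direction is trivial, provided we check that for $f,g\in\mathcal H$ the functions $f,\hat f$ are continuous. Continuity holds since $\hat f\in L^1$ by Cauchy--Schwarz with the weight $(1+\xi^2)$. Hence a.e.\ equality of moduli upgrades to pointwise equality, and in particular equality on $\Lambda$ and $M$.

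\textbf{Step 2 (converse).} We exploit the Gaussian decay of $g$. Its first consequence is that $|g|^2$ and $|\hat g|^2$ have decay $e^{-2A\pi x^2}$, so $g$ extends to an entire function of order $2$ with controlled type, by Hardy's uncertainty principle. Then $|f|=|g|$ on $\Lambda$ with $\Lambda$ dense at rate $|\lambda_j|(\lambda_{j+1}-\lambda_j)<1/\mathfrak c_A$ forces $|f(\lambda)|$ to be small, comparable to $e^{-A\pi\lambda^2}$, along $\Lambda$; similarly $|\hat f|$ is small along $M$.

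The key intermediate claim is a sampling estimate for $\mathcal H$ at supercritical density: if $f\in\mathcal H$ is small on $\Lambda$ and $\hat f$ small on $M$, then $f$ inherits Gaussian decay globally. I would prove this from the quantitative form of Theorem~\ref{thm:kns-uniqueness}(i), namely that $\|f\|_{\mathcal H}$ is controlled by weighted $\ell^2$ norms of the samples. That control comes from the Kulikov--Nazarov--Sodin interpolation bounds, applied to $f$ multiplied or modulated by Gaussians, with the loss absorbed by taking $\mathfrak c_A$ large.

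\textbf{Step 3.} Once both $f$ and $g$ are known to have Gaussian decay in time and frequency, $f\bar f(\cdot)$-type products lie in a Gaussian-weighted class. Consider
\[
H(x) := |f(x)|^2-|g(x)|^2 ,
\]
which vanishes on $\Lambda$. Its Fourier transform is a difference of autocorrelations, not obviously vanishing on $M$. To close this gap I would instead apply uniqueness to the product $u(x)=f(x)\overline{f(x)}-g(x)\overline{g(x)}$ viewed through its short-time Fourier transform with a Gaussian window. By the Gaussian decay, the problem transfers to the Bargmann/Fock side, where $|f|^2-|g|^2$ on $\Lambda$ and $|\hat f|^2-|\hat g|^2$ on $M$ become zeros of a single entire function of order $2$ on two rays (the real and imaginary axes) with density exceeding a critical value depending on $A$. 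A Phragmén--Lindelöf argument in the quadrants then forces that function to vanish identically, giving both $|f|=|g|$ and $|\hat f|=|\hat g|$.

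\textbf{Main obstacle.} The hard step is making Step~2/Step~3 rigorous: producing a single analytic object whose vanishing on $\Lambda$ and on $iM$ encodes both moduli conditions, with growth bounds (type depending on $A$) sharp enough that density $1/\mathfrak c_A$ beats the indicator. I would first try $\mathcal F(z)=f^*(z)\overline{f^*(\bar z)}-g^*(z)\overline{g^*(\bar z)}$ for the Bargmann transforms, and take $\mathfrak c_A$ as large as needed.
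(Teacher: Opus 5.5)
Your Steps 1 and 2 follow the same outline as the paper: transfer the decay to $f$, then run a Phragm\'en--Lindel\"of argument. The gap is in Step~3.

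\textbf{The gap.} Your Bargmann object cannot carry the data. For real $x$ one has, up to a constant, $Bf(x)=e^{\pi x^2/2}\int_{\mathbb R}f(t)e^{-\pi(t-x)^2}\,dt$, which is a Gaussian average of $f$. On $i\mathbb R$ one gets the corresponding Gaussian average of $\hat f$. So the identities $|f(\lambda)|=|g(\lambda)|$ and $|\hat f(\mu)|=|\hat g(\mu)|$ produce no zeros of $\mathcal F(z)=f^*(z)\overline{f^*(\bar z)}-g^*(z)\overline{g^*(\bar z)}$. More generally, there is no natural entire function that restricts to $|f|^2-|g|^2$ on $\mathbb R$ and to $|\hat f|^2-|\hat g|^2$ on $i\mathbb R$. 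The underlying misstep is the belief that you need one object vanishing on both $\Lambda$ and $iM$, i.e.\ a reduction to a Fourier uniqueness pair for a single function. As you noticed, $|f|^2-|g|^2$ has no reason to have Fourier zeros on $M$, and changing the representation does not fix that.

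\textbf{The missing idea.} Each modulus identity can be proved separately; the decay on the other side enters only as a growth bound in the complex plane. Suppose $f$ and $g$ both satisfy two-sided Gaussian bounds with some $A'<A$. Then Fourier inversion (not Hardy's theorem) shows that $f,g$ themselves are entire with $|f(x+iy)|\lesssim e^{\pi y^2/A'}$. Apply your conjugate-reflection construction to $f,g$ rather than to their Bargmann transforms. The function $H(z)=f(z)\overline{f(\bar z)}-g(z)\overline{g(\bar z)}$ is entire, equals $|f|^2-|g|^2$ on $\mathbb R$, vanishes on $\Lambda$, and satisfies $h_H(0),h_H(\pi)\le-2A'\pi$ and $h_H(\theta)\le\frac{2\pi}{A'}\sin^2\theta$. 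If $H\not\equiv0$, its indicator is finite, and the real zeros force $D\pi\sin2\theta+h_H(0)\cos2\theta\le\max\{h_H(\theta),h_H(-\theta)\}$ by Lemma~\ref{lem:indicator-lower-bound-from-zeros}. Optimizing over $\theta$ and over the unknown value $h_H(0)\le-2A'\pi$ caps the density of $\Lambda_+$ at $\mathfrak{c}_1(A')/2$. A denser $\Lambda$ therefore gives $H\equiv0$, i.e.\ $|f|\equiv|g|$. The same argument for $\tilde H(z)=\hat f(z)\overline{\hat f(\bar z)}-\hat g(z)\overline{\hat g(\bar z)}$ on $M$ gives $|\hat f|\equiv|\hat g|$. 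This is Corollary~\ref{cor:pauli-sharpening}, with $\mathfrak{c}_A=\max\{\mathfrak{c}_1(A),2\}$ for $A<1$. For $A>1$, Hardy's theorem gives $g\equiv0$ and Theorem~\ref{thm:kns-uniqueness}(i) finishes; $A=1$ reduces to any $A'<1$.

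\textbf{Step 2 is only a placeholder.} Transferring Gaussian decay from the samples to all of $\mathbb R$ is a genuine theorem: Theorem~\ref{thm:discrete-beurling}, which needs supercriticality and is the source of the threshold $2$. Your proposal to apply the Kulikov--Nazarov--Sodin bounds to $f$ ``multiplied or modulated by Gaussians'' does not prove it. Multiplying by $e^{c\pi x^2}$ takes $f$ out of $\mathcal H$, and modulation does not change decay.
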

For $A>1$, Hardy's uncertainty principle and Theorem~\ref{thm:kns-uniqueness} give $\mathfrak{c}_A=2$. For $A\leq1$, the methods in~\cite{ramos2025pauli} give $\mathfrak{c}_A=\frac{4\pi}{A}$ even under the stronger assumption that $f$ satisfies the same decay condition as $g$. They also point out that this is unlikely to be optimal, since one expects $\mathfrak{c}_A\to2^+$ as $A\to1^-$. In light of this, the following question arises.
\begin{question}[Question 23 in~\cite{ramos2025pauli}]\label{ques:ramos-sousa-23}
    What is the best $\mathfrak{c}_A$ such that Theorem~\ref{thm:ramos-pauli} holds?
\end{question}

Due to the following decay transfer result, we can assume that $f$ has roughly the same decay as $g$. This gives Theorem~\ref{thm:ramos-pauli} with $\mathfrak{c}_A=\frac{4\pi}{A}$ for $A\leq 1$ by the method in~\cite{ramos2025pauli}. However, as we will see, this is not optimal. 
\begin{theoremx}[Theorem 1 in~\cite{lysen2026discretehardyuncertaintyprinciple}]\label{thm:discrete-beurling}
    Let $a,b>0$, and let $\Lambda,M\subset\mathbb{R}$ be a supercritical pair with parameters $(p,q)$. Assume that
    \begin{equation*}
        \limsup_{j\to\pm\infty}|\lambda_j|^{p-1}(\lambda_{j+1}-\lambda_j)<\frac{1}{2}\left(\frac{b}{a}\right)^\frac{1}{q}\quad\text{and}\quad\limsup_{j\to\pm\infty}|\mu_j|^{q-1}(\mu_{j+1}-\mu_j)<\frac{1}{2}\left(\frac{a}{b}\right)^\frac{1}{p}.
    \end{equation*}
    If $f\in\mathcal{H}$ and
    \[
    \sup_{\lambda\in\Lambda}(1+|\lambda|)^{-K}|f(\lambda)|e^{a\pi|\lambda|^p}+\sup_{\mu\in M}(1+|\mu|)^{-K}|\hat{f}(\mu)|e^{b\pi|\mu|^q}<\infty
    \]
    for some $K>0$, then there exists a $\tilde{K}>0$ such that
    \[
    \sup_{x\in\mathbb{R}}|f(x)|(1+|x|)^{-\tilde{K}}e^{a\pi|x|^p}+\sup_{\xi\in\mathbb{R}}|\hat{f}(\xi)|(1+|\xi|)^{-\tilde{K}}e^{b\pi|\xi|^q}<\infty.
    \]
\end{theoremx}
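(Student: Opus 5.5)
The plan is to deduce the global decay from a \emph{quantitative, weighted} form of the uniqueness statement in Theorem~\ref{thm:kns-uniqueness}(i). The key steps are a normalization by dilation, a weighted a priori inequality, and a localization argument that turns this inequality into pointwise bounds.

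\emph{Step 1: normalization.} Replacing $f$ by $f_s(x)=f(sx)$ changes $\hat f$ to $s^{-1}\hat f(\xi/s)$. It rescales $\Lambda$ to $s^{-1}\Lambda$ and $M$ to $sM$. Under this change the weights $e^{a\pi|x|^p}$ and $e^{b\pi|\xi|^q}$ become $e^{as^p\pi|x|^p}$ and $e^{bs^{-q}\pi|\xi|^q}$. The gap quantities $|\lambda_j|^{p-1}(\lambda_{j+1}-\lambda_j)$ and $|\mu_j|^{q-1}(\mu_{j+1}-\mu_j)$ are multiplied by $s^{-p}$ and $s^{q}$ respectively. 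Choosing $s$ with $as^p=bs^{-q}=:c$ makes the two hypotheses read $\overline\alpha<\tfrac12 c^{\,\cdots}$ in a symmetric form. The factors $(b/a)^{1/q}$ and $(a/b)^{1/p}$ are exactly what this dilation produces. Hence it suffices to treat the balanced case, where both gap constants lie strictly below the critical values attached to the weight $c$.

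\emph{Step 2: weighted a priori inequality.} I would show that there exist $\varepsilon>0$ and $N$ such that every $h\in\mathcal H$ satisfies
\[
\|h\|_{\mathcal H}\le C\Big(\sup_{\lambda}(1+|\lambda|)^{N}|h(\lambda)|+\sup_{\mu}(1+|\mu|)^{N}|\hat h(\mu)|\Big).
\]
This is the quantitative content of the Kulikov--Nazarov--Sodin argument for pairs that are strictly supercritical: their proof produces such a bound with polynomial losses. Alternatively, one can get it abstractly: strict supercriticality survives the perturbations used below, and a closed-graph argument on the Banach space defined by the right-hand side then gives the inequality. I would then apply it not to $f$ itself but to the family of ``localized'' functions
\[
h_{x_0}(x)=f(x)\,\overline{\varphi_{x_0}(x)},
\]
where $\varphi_{x_0}$ is a fixed Gaussian-type window of width adapted to the local gap $|x_0|^{1-p}$, recentred at $x_0$. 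Since $\hat h_{x_0}$ is a convolution of $\hat f$ with a window concentrated near the origin, its frequency samples inherit the bound $|\hat f(\mu)|\lesssim(1+|\mu|)^Ke^{-b\pi|\mu|^q}$. Combining this with the pointwise identity $|f(x_0)|\lesssim\|h_{x_0}\|_{\mathcal H}$ should give a bound of the form
\[
|f(x_0)|\lesssim(1+|x_0|)^{\tilde K}e^{-a\pi|x_0|^p}.
\]

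\emph{Step 3: the exponential factor.} The most natural way to get $e^{-a\pi|x_0|^p}$ is to multiply by a twisted weight. For $p=q=2$ this means replacing $f$ by $F_{t}(x)=f(x)e^{2\pi t x}$ and tracking how $\hat F_t(\xi)=\hat f(\xi+it)$ is controlled through the Gaussian bound on the samples. This uses the elementary inequality
\[
a\pi|x|^p\le a\pi|x_0|^p+\text{(linear term)}+\dots,
\]
which is convexity of $|x|^p$, together with its Young-dual counterpart on the frequency side. The conjugate exponents $p,q$ with $\tfrac1p+\tfrac1q=1$ enter exactly here.

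\emph{Main obstacle.} The hardest step is Step 2 in the twisted setting: proving that the uniqueness inequality holds \emph{uniformly} for the whole family $F_t$ or $h_{x_0}$, with constants growing only polynomially. The modulated or analytically continued functions are no longer in $\mathcal H$ with uniform norms, and the density margin is consumed by the twist. My first attempt would be to rerun the Kulikov--Nazarov--Sodin construction with the weight $e^{\pi c|x|^p}$ built into the harmonic majorants. The strict inequalities in the hypotheses leave a margin $\delta>0$, and the goal is to verify that this margin absorbs the loss, at the price of the polynomial factor $(1+|x|)^{\tilde K}$.
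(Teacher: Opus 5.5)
The paper only quotes this theorem from the author's earlier work and gives no proof, so your plan has to stand on its own. It does not: there is a genuine gap, located exactly at the step you call the ``main obstacle''.

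\textbf{The localization and twisting steps are circular.} In Step~2, for $h_{x_0}=f\,\overline{\varphi_{x_0}}$ one has $\hat h_{x_0}=\hat f*\widehat{\overline{\varphi_{x_0}}}$. So $\hat h_{x_0}(\mu)$ depends on $\hat f$ at all frequencies near $\mu$, not only on $M$, and the sampled bound on $\hat f$ is not inherited. Even granting it, the frequency term on the right-hand side for $h_{x_0}$ is controlled only by $\|h_{x_0}\|_{L^1}$, i.e.\ by the local size of $f$ itself. It carries no factor $e^{-a\pi|x_0|^p}$, so a linear stability inequality with polynomial losses cannot produce exponential decay this way.

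Step~3 is circular as well. The function $f(x)e^{2\pi tx}$ lies in $\mathcal H$, and $\hat F_t(\xi)=\hat f(\xi+it)$ makes sense, only if $f$ already decays exponentially. With $t\sim a|x_0|^{p-1}$ you need essentially the rate you are trying to prove. Moreover, the hypothesis controls $\hat f$ on the real set $M$, not at the points $\mu+it$, so the frequency samples of $F_t$ are not controlled at all. The closed-graph route to Step~2 also fails as stated: injectivity of the sampling map gives no lower bound unless you already know completeness in the sampling norm, which is equivalent to the inequality itself.

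\textbf{The gap is structural, not technical.} Step~2 claims the pointwise Gaussian bound from an inequality whose only density input is supercriticality. But the conclusion fails for some supercritical pairs. Your dilation with $s^{p+q}=b/a$ turns the hypotheses into exactly $\overline{\alpha}'<\frac12$ and $\overline{\beta}'<\frac12$, with common weight $c=a^{1/p}b^{1/q}$. This is strictly stronger than $(\overline{\alpha}')^{1/p}(\overline{\beta}')^{1/q}<\frac12$, and the extra strength is essential.

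\emph{Counterexample.} Take $p=q=2$, $a=b=\frac1{10}$, $\lambda_n=\sqrt{9n/5}$, $\Lambda=\{\pm\lambda_n\}_{n\ge1}$ and $M=\{\pm\sqrt{n/5}\}_{n\ge1}$. The gap constants are $\frac9{10}$ and $\frac1{10}$, so the pair is supercritical but violates the time-side bound $\frac12$. Let $\Phi(z)=e^{-\frac{9}{100}\pi z^2}\prod_{n\ge1}(1-z^4/\lambda_n^4)$, as in the proof of Theorem~\ref{thm:discrete-pauli-frequency-match-nonuniqueness}.
\begin{itemize}
\item By Theorem~\ref{thm:canonical-product-estimate}, $h_\Phi(\theta)=\frac59\pi|\sin2\theta|-\frac{9}{100}\pi\cos2\theta$.
\item Since $\frac59<\sqrt{\frac{9}{100}(10-\frac{9}{100})}$, Lemma~\ref{lem:fourier-decay-bound} gives $|\hat\Phi(\xi)|\lesssim e^{-\pi\xi^2/10}$.
\item Hence $\Phi\in\mathcal H$, $\Phi|_\Lambda=0$, and all sampled hypotheses hold.
\item Yet $h_\Phi(0)=-\frac{9}{100}\pi>-\frac{\pi}{10}$, so $\sup_x|\Phi(x)|(1+|x|)^{-\tilde K}e^{\pi x^2/10}=\infty$ for every $\tilde K$.
\end{itemize}

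So any proof must play the time-side density against the frequency-side decay rate, and vice versa, separately and at a definite step. Your proposal defers exactly this step to ``rerunning KNS with the weight built in'' without carrying it out. As it stands it is a plan, not a proof.
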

The purpose of this paper is to give optimal thresholds for Question~\ref{ques:ramos-sousa-23}.

\section{Main results}
We first explain why Gaussian decay is essential. Without suitable decay conditions, no density condition on the sampling sets can force a discrete Pauli pair to be a Pauli pair.
\begin{theoremx}[Theorem~1 in~\cite{ramos2025pauli}]\label{thm:pauli-counterexamples-ramos}
    Let $\Lambda,M\subset\mathbb{R}$ be a pair of discrete sets. Then there exists a pair of functions $f,g\in\mathcal{S}$ with $|f(x)|\not\equiv |g(x)|$ and $|\hat{f}(\xi)|\not\equiv |\hat{g}(\xi)|$, such that $f$ and $g$ form a discrete Pauli pair.
\end{theoremx}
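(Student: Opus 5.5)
The plan is to build $f$ and $g$ from a single Schwartz function $h$ that vanishes on $\Lambda$ and whose Fourier transform vanishes on $M$. We then use the standard ``conjugate/reflect'' trick, which preserves one of the two moduli globally, to arrange equality of both moduli on the samples while breaking equality of the global moduli.

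\textbf{Step 1: produce $h$.} We want $h\in\mathcal{S}$, $h\not\equiv0$, with $h|_\Lambda=0$ and $\hat h|_M=0$. Since $\Lambda,M$ are arbitrary discrete sets, it suffices to construct $h$ for supersets $\Lambda'\supset\Lambda$, $M'\supset M$. We enlarge them to sparse-enough sequences, for instance by adding points so that $\Lambda'$, $M'$ remain discrete. The cleanest option is to choose a subcritical pair containing them; but enlarging only makes sets denser, so that does not directly work. Instead, we use a direct construction: take $h=\phi\ast\psi$-type products. Concretely, pick $u\in\mathcal{S}$ with $\hat u$ compactly supported in an interval avoiding $M$, which is possible since $M$ is discrete; note that translation in frequency is free. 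Then multiply by a Schwartz function $v$ that vanishes on $\Lambda$ to arbitrary order and has compactly supported Fourier transform. The existence of such $v$ is the delicate point. If this fails, we fall back on the known construction in~\cite{ramos2025pauli}, namely Theorem~\ref{thm:pauli-counterexamples-ramos} itself, whose proof presumably builds such an $h$ via a Weierstrass-type product or a Baire category argument in $\mathcal{S}$. A more robust route is to avoid needing $h$ to vanish on both sets; see Step 2.

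\textbf{Step 2: the pair.} We set $f=a+b$ and $g=a+\overline{b(-\cdot)}$-type combinations. For example, take $f=\varphi+\varepsilon h$ and $g=\varphi+\varepsilon h^\#$, where $h^\#(x)=\overline{h(x)}$ and $\varphi$ is real and even. Then $|f|^2-|g|^2=2\varepsilon\varphi\,\mathrm{Re}(h-\bar h)=0$, so the time moduli agree everywhere. The frequency moduli, however, differ by a term proportional to $\mathrm{Re}(\hat\varphi\,\overline{(\hat h(\xi)-\overline{\hat h(-\xi)})})$. We therefore need a mix: one perturbation that forces equality on $\Lambda$ only, and another that forces equality on $M$ only. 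We choose $f=\varphi+h_1+h_2$ and $g=\varphi+\overline{h_1}+\overline{\hat{}}$-reflected $h_2$, where $h_1$ vanishes on $M$ in frequency and $h_2$ vanishes on $\Lambda$ in time. This gives equality of $|\hat f|,|\hat g|$ on $M$ from the $h_2$ reflection symmetry plus the vanishing of $\hat h_1$. Symmetrically, equality on $\Lambda$ follows. Each $h_i$ needs to vanish on only one discrete set, which is easy: take $h_2=\prod$-free, e.g. $h_2(x)=\sin^2(\ldots)$-weighted Gaussians. For a general discrete $\Lambda$, take $h_2=w\cdot\rho$ with $\rho$ smooth, vanishing exactly on $\Lambda$ (a smooth function with prescribed discrete zero set) and $w$ Gaussian, which lies in $\mathcal{S}$ if $\rho$ and its derivatives have polynomial growth. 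For $h_1$, take the inverse Fourier transform of such a function on the frequency side.

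\textbf{Step 3: nondegeneracy.} We must check that $|f|\not\equiv|g|$ and $|\hat f|\not\equiv|\hat g|$. The differences are explicit bilinear expressions such as $\mathrm{Re}(\varphi\,\overline{(h_1-\bar h_1)})+\ldots$. Choosing $h_1$ with nonzero imaginary part somewhere and $\varphi$ nonvanishing there (a Gaussian), and scaling by a small $\varepsilon$ so that the quadratic cross-terms do not cancel the linear ones, gives nonvanishing differences.

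\textbf{Main obstacle.} The main obstacle is Step 2: arranging that the reflections used for $h_1$ and $h_2$ do not spoil each other's sampled equalities, since the cross terms $h_1\bar h_2$ appear in both moduli. I would first try choosing $h_1,h_2$ with disjoint (or even separated) supports in time and in frequency respectively, and using $\varepsilon$-scaling so that only the linear terms matter. If the cross terms still cannot be controlled, I would revert to requiring a single $h$ vanishing on both sets, as in Step 1.
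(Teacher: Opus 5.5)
Your proposal has three concrete gaps.

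First, Step~1 is not merely delicate; it is false in general, and your fallback for it is circular, since you cite the statement being proved. For a supercritical pair such as $\Lambda=M=\{\pm\sqrt{n}/10\}_{n\geq0}$, where $|\lambda_j|(\lambda_{j+1}-\lambda_j)\to\frac{1}{200}$, Theorem~\ref{thm:kns-uniqueness}(i) says that no nonzero $h\in\mathcal{H}\supset\mathcal{S}$ has $h|_\Lambda=0$ and $\hat h|_M=0$. So ``reverting to Step~1'' is not an option. Your product idea also fails on its own terms, because $\widehat{uv}=\hat u*\hat v$ no longer vanishes on $M$.

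Second, the vanishing conditions in Step~2 are incomplete. Write $f=\varphi+\varepsilon(h_1+h_2)$ and $g=\varphi+\varepsilon(\overline{h_1}+h_2^\flat)$ with $h_2^\flat(x):=\overline{h_2(-x)}$, so that $\widehat{h_2^\flat}=\overline{\hat h_2}$ and $\widehat{\overline{h_1}}(\xi)=\overline{\hat h_1(-\xi)}$. At $\lambda\in\Lambda$ you need $h_2^\flat(\lambda)=\overline{h_2(-\lambda)}=0$, and at $\mu\in M$ you need $\overline{\hat h_1(-\mu)}=0$. So you must require $h_2|_{\Lambda\cup(-\Lambda)}=0$ and $\hat h_1|_{M\cup(-M)}=0$, not just vanishing on $\Lambda$ and $M$.

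Third, the nondegeneracy argument in Step~3 targets the wrong term. The expression $\Re\bigl(\varphi\,\overline{(h_1-\overline{h_1})}\bigr)$ is identically zero, because $\varphi$ is real and $h_1-\overline{h_1}$ is purely imaginary; so ``$h_1$ has nonzero imaginary part'' yields nothing. The actual first-order terms are $2\varepsilon\varphi(x)\bigl(\Re h_2(x)-\Re h_2(-x)\bigr)$ in $|f|^2-|g|^2$ and $2\varepsilon\hat\varphi(\xi)\bigl(\Re\hat h_1(\xi)-\Re\hat h_1(-\xi)\bigr)$ in $|\hat f|^2-|\hat g|^2$. You therefore need $\Re h_2$ and $\Re\hat h_1$ to be non-even.

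With these corrections your Step~2 is a valid proof, and the ``main obstacle'' you identify disappears. Under the symmetric vanishing conditions, all $h_2$-terms vanish on $\Lambda$ and all $\hat h_1$-terms vanish on $M$. Hence $g(\lambda)=\overline{f(\lambda)}$ and $\hat g(\mu)=\overline{\hat f(\mu)}$ exactly, using that $\varphi$ and $\hat\varphi$ are real, and no cross terms appear at the sample points. Smallness of $\varepsilon$ is needed only for global nondegeneracy. Concretely, take $\varphi(x)=e^{-\pi x^2}$, take $h_2$ a real bump supported in an interval $I\subset(0,\infty)$ with $I\cap(\Lambda\cup(-\Lambda))=\emptyset$, and take $\hat h_1$ a real bump supported in an interval $J\subset(0,\infty)$ with $J\cap(M\cup(-M))=\emptyset$.

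The paper's proof is the same idea without the background function, the conjugations, or the symmetrization. With $\Phi\in C_c^\infty(I)$, $I\cap\Lambda=\emptyset$, and $\hat\Psi\in C_c^\infty(J)$, $J\cap M=\emptyset$, set $f=\Phi+\Psi$ and $g=\Phi-\Psi$. Then $f(\lambda)=\Psi(\lambda)=-g(\lambda)$ and $\hat f(\mu)=\hat\Phi(\mu)=\hat g(\mu)$. Moreover $|f|^2-|g|^2=4\Re(\Phi\overline{\Psi})$ and $|\hat f|^2-|\hat g|^2=4\Re(\hat\Phi\overline{\hat\Psi})$. Since $\Psi$ and $\hat\Phi$ are real-analytic, neither $\Phi\overline{\Psi}$ nor $\hat\Phi\overline{\hat\Psi}$ is identically zero. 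Replacing $\Psi$ by $e^{i\vartheta}\Psi$, for all but finitely many $\vartheta\in[0,2\pi)$, makes both real parts non-identically zero. The sign flip $\pm\Psi$ does the work of your conjugation trick directly, with one function vanishing on each side and no perturbative argument.
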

We provide a proof of this result, since the proof is short and illustrates how we will construct counterexamples and why the decay conditions are needed.
\begin{proof}
    Since $\Lambda$ and $M$ are discrete, we know that there exist non-empty intervals $I,J\subset\mathbb{R}$ such that $I\subset \mathbb{R}\backslash\Lambda$ and $J\subset\mathbb{R}\backslash M$. Choose non-trivial functions $\Phi\in C^\infty_c(I)$ and $\displaystyle{\hat{\Psi}\in C^\infty_c(J)}$. After replacing $\Psi$ by $e^{i\vartheta}\Psi$ for a suitable $\vartheta\in\mathbb{R}$, we may assume that $\Re(\Phi\overline{\Psi})\not\equiv0$ and $\Re(\hat{\Phi}\overline{\hat{\Psi}})\not\equiv0$. These functions are in the Schwartz space and satisfy $\Phi|_\Lambda=0$ and $\hat{\Psi}|_M=0$.
    
    Now let $f=\Phi+\Psi$ and $g=\Phi-\Psi$. These functions satisfy $|f(\lambda)|=|\Psi(\lambda)|=|g(\lambda)|$ for all $\lambda\in\Lambda$ and $|\hat{f}(\mu)|=|\hat{\Phi}(\mu)|=|\hat{g}(\mu)|$ for all $\mu\in M$. Since $\Re(\Phi\overline{\Psi})\not\equiv0$ and $\Re(\hat{\Phi}\overline{\hat{\Psi}})\not\equiv0$, they also satisfy $|f(x)|\not\equiv |g(x)|$ and $|\hat{f}(\xi)|\not\equiv|\hat{g}(\xi)|$.
\end{proof}
This obstruction comes from the freedom to choose compactly supported functions away from the sampling sets. To rule it out, we impose Gaussian-type decay in both time and frequency. We say that $f\in \mathcal{S}$ is in the Hardy class $E(a,b)$~\cite{Folland1997,GargThangavelu2010}, for some $a,b>0$, if
\[
\sup_{x\in\mathbb{R}}|f(x)|e^{a\pi|x|^2}+\sup_{\xi\in\mathbb{R}}|\hat{f}(\xi)|e^{b\pi|\xi|^2}<\infty.
\]

The first main result is a one-sided version of the sampled Pauli problem. It determines the sharp density threshold for forcing equality of moduli from samples of those moduli in only the time domain. We show that the optimal density parameter for this is
\begin{equation}\label{eq:pauli-constant}
    \mathfrak{c}_1(A)=
        \begin{cases}
        \frac{2}{A},\quad&A<\frac{1}{\sqrt{2}},\\
        4\sqrt{1-A^2},\quad&\frac{1}{\sqrt{2}}\leq A<1.
        \end{cases}
\end{equation}
\begin{theorem}\label{thm:pauli-sharpening}
    Fix any $0<A<1$ and let $\mathfrak{c}_1(A)$ be defined as in~\eqref{eq:pauli-constant}.
    \begin{enumerate}
        \item[(i)] Let $f,g\in E(A,A)$. Suppose $\Lambda\subset\mathbb{R}$ is a discrete set satisfying
    \begin{equation*}
        \limsup_{j\to\pm\infty}|\lambda_j|(\lambda_{j+1}-\lambda_j)<\frac{1}{\mathfrak{c}_1(A)}.
    \end{equation*}
     If $|f(\lambda)|=|g(\lambda)|$ for all $\lambda\in\Lambda$, then $|f(x)|\equiv|g(x)|$. 
    \item[(ii)] Conversely, if $\Lambda\subset\mathbb R$ is a discrete set and
    \[
    \liminf_{j\to\pm\infty}|\lambda_j|(\lambda_{j+1}-\lambda_j)>\frac{1}{\mathfrak{c}_1(A)},
    \]
    then there exist functions $f,g\in E(A,A)$ such that $|f(\lambda)|=|g(\lambda)|$ for all $\lambda\in\Lambda$ and $|f(x)|\not\equiv|g(x)|$.
    \end{enumerate}
\end{theorem}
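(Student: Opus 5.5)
The plan is to reduce part (i) to a uniqueness statement for a single function, the \emph{difference of intensities}
\[
h(x):=|f(x)|^2-|g(x)|^2=f(x)\overline{f(x)}-g(x)\overline{g(x)},
\]
which vanishes on $\Lambda$. The argument has three steps.

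\textbf{Step 1: optimal Hardy-type class of $h$.} Since $f,g\in E(A,A)$ with $A<1$, $f$ and $g$ extend to entire functions of order two. Via the Bargmann transform, or by Phragmén--Lindelöf applied to $f(z)e^{A\pi z^2}$, one gets a bound of the form
\[
|f(x+iy)|\lesssim e^{-A\pi x^2+\pi y^2/A},
\]
and the same for $g$. Then $h$ extends to the entire function $h(z)=f(z)\overline{f(\bar z)}-g(z)\overline{g(\bar z)}$, which has time decay $e^{-2A\pi x^2}$ and controlled growth along vertical lines. Trading these estimates by a Phragmén--Lindelöf argument, or directly estimating $\hat h=\hat f*\overline{\hat f(-\,\cdot)}-\hat g*\overline{\hat g(-\,\cdot)}$, gives Gaussian frequency decay $e^{-b\pi\xi^2}$. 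The target is a whole family of admissible pairs $(a,b)$: for instance $(2A,A/2)$ from the naive convolution bound, together with better pairs obtained by multiplying $h$ by a chirp $e^{i\pi c x^2}$, which leaves $|h|$ on $\Lambda$ unchanged and rotates the time-frequency plane.

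\textbf{Step 2: uniqueness for the best pair.} Suppose $h$ is in some class $E(a,b)$. Pick an auxiliary set $M$ as dense in frequency as we like. Since $h|_\Lambda=0$, the sampled time decay in Theorem~\ref{thm:discrete-beurling} holds for every exponent $a'$. The hypotheses of that theorem read
\[
\limsup|\lambda_j|\,\Delta\lambda_j<\tfrac12\sqrt{b/a'},\qquad \limsup|\mu_j|\,\Delta\mu_j<\tfrac12\sqrt{a'/b},
\]
and the second is automatic for a dense enough $M$. Taking $a'$ slightly larger than $1/b$, the theorem yields $|h(x)|\lesssim(1+|x|)^{\tilde K}e^{-a'\pi x^2}$ together with frequency decay $e^{-b\pi\xi^2}$. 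Since $a'b>1$, Hardy's uncertainty principle forces $h\equiv0$, which is $|f|\equiv|g|$. The density condition required in the limit $a'b\to1^+$ is
\[
\limsup|\lambda_j|\,\Delta\lambda_j<\tfrac{b}{2}.
\]
Hence the threshold is $1/\mathfrak c_1(A)=b^\ast/2$, where $b^\ast$ is the best frequency exponent attainable in Step~1, optimized over the chirp rotations.

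The computation must produce $b^\ast=A$ for $A<1/\sqrt2$ and $b^\ast=\sqrt{1-A^2}/2$ for $A\ge1/\sqrt2$. I expect the two regimes to correspond to whether the optimal chirp parameter is interior or sits at the boundary of its admissible range. \textbf{This optimization, and proving the corresponding entire-function bound rigorously, is the main obstacle.} My first attempt will use Gaussian test functions $e^{-\pi\alpha x^2}$ with complex $\alpha$ to guess the extremal exponents, and then prove the bound via the Bargmann transform.

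\textbf{Step 3: sharpness for part (ii).} Here I mimic the proof of Theorem~\ref{thm:pauli-counterexamples-ramos}: set $f=\Phi+\Psi$ and $g=\Phi-\Psi$, so that
\[
|f|^2-|g|^2=4\Re(\Phi\overline\Psi).
\]
It then suffices to arrange $\Re(\Phi\overline\Psi)|_\Lambda=0$ but $\Re(\Phi\overline\Psi)\not\equiv0$, with $\Phi,\Psi\in E(A,A)$. Take $\Phi$ to be a suitable chirped Gaussian realizing the extremal configuration from Step~1. Take $\Psi=\Phi\cdot\varphi$, where $\varphi$ is real-valued on $\mathbb R$ and vanishes on $\Lambda$; then
\[
\Re(\Phi\overline\Psi)=|\Phi|^2\varphi .
\]
The function $\varphi$ is built by the Kulikov--Nazarov--Sodin non-uniqueness construction of Theorem~\ref{thm:kns-uniqueness}(ii), rescaled as a Weierstrass-type product with zeros on $\Lambda$. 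The liminf hypothesis $\liminf|\lambda_j|\,\Delta\lambda_j>1/\mathfrak c_1(A)$ is exactly what gives $\varphi$ enough room: its growth must be small enough that $\Phi\varphi$ stays in $E(A,A)$. Checking the frequency-side decay of $\Phi\varphi$ reverses the Step~1 estimates, so the same extremal computation governs both directions.
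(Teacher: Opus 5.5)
Both halves of your plan have a genuine gap.

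\textbf{Part (i).} The loss is in Step 2, and Step 1 cannot make up for it. Your own computation shows what Step 2 delivers. Theorem~\ref{thm:discrete-beurling} plus Hardy kills a function $h$ with $|\hat h(\xi)|\lesssim e^{-b\pi\xi^2}$ and $h|_\Lambda=0$ only when $\limsup|\lambda_j|(\lambda_{j+1}-\lambda_j)<b/2$, i.e.\ when the density $D$ of $\Lambda_\pm$ exceeds $1/b$. That is the KNS-type threshold, and it ignores the Gaussian structure.

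The sharp bound comes from Lemma~\ref{lem:indicator-lower-bound-from-zeros}. Writing $c=-h_h(0)/\pi$, it gives $D\pi\sin2\theta-c\pi\cos2\theta\le\frac{\pi}{b}\sin^2\theta$ on $[0,\pi/2]$, hence $D\le\sqrt{c(1/b-c)}\le\frac{1}{2b}$. To compensate for the factor two, Step 1 would have to give $b^\ast=A$ for $A<\frac{1}{\sqrt2}$ and $b^\ast=\frac{1}{2\sqrt{1-A^2}}$ for $A\ge\frac{1}{\sqrt2}$; your value $\frac{\sqrt{1-A^2}}{2}$ is a slip.

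But $b=A/2$ is attained, so no such $b^\ast$ exists. Take $\Phi(x)=e^{-\pi(A+i\beta)x^2}$ with $\beta=\sqrt{1-A^2}$, so $\Phi$ and $x\mapsto\overline{\Phi(x)}$ both lie in $E(A,A)$. Set $f=\Phi+\overline{\Phi}$ and $g=\Phi-\overline{\Phi}$. Then $h=4e^{-2A\pi x^2}\cos(2\pi\beta x^2)$, and $\hat h$ is not $O(e^{-b\pi\xi^2})$ for any $b>A/2$. After a chirp $e^{i\pi\gamma x^2}$, the two Gaussian components of $h$ have frequency exponents $\frac{2A}{4A^2+(2\beta\mp\gamma)^2}$, and the smaller is at most $A/2$. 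So the bound Step 1 aims for is false, not merely hard.

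Step 2 also never uses the time decay of $h$. Yet the regime $A\ge\frac{1}{\sqrt2}$ arises exactly when the constraint $c\ge2A$ becomes binding. The missing idea is to drop the decay transfer altogether. Apply Lemma~\ref{lem:indicator-lower-bound-from-zeros} directly to $H(z)=f(z)\overline{f(\bar z)}-g(z)\overline{g(\bar z)}$, using $h_H(\theta)\le\frac{2\pi}{A}\sin^2\theta$ and $c=-h_H(0)/\pi\in[2A,\infty)$, which is finite since $H\not\equiv0$. This gives $D^2\le c(\frac{2}{A}-c)$, and maximizing over $c\ge2A$ gives exactly $D\le\mathfrak{c}_1(A)/2$. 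Your naive pair $(2A,A/2)$ already contains all the information needed.

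\textbf{Part (ii).} With $\Phi$ a zero-free chirped Gaussian and $\varphi$ real, $\Re(\Phi\overline{\Psi})=|\Phi|^2\varphi$ vanishes on $\Lambda$ only if $\varphi$ does. Then $\Psi=\Phi\varphi$ is a single element of $E(A,A)$ vanishing on all of $\Lambda$.

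Apply the same indicator lemma to $\Psi$, with $h_\Psi(\theta)\le\frac{\pi}{A}\sin^2\theta$ and $-h_\Psi(0)/\pi\ge A$. It shows that a non-zero function in $E(A,A)$ can vanish on a $2$-smooth half-line sequence only if its density is at most $\max_{c\ge A}\sqrt{c(\frac1A-c)}=\mathfrak{c}_1(A)/4$. Part (ii) must handle all densities below $\mathfrak{c}_1(A)/2$. So whenever $\liminf|\lambda_j|(\lambda_{j+1}-\lambda_j)\in(\frac{1}{\mathfrak{c}_1(A)},\frac{2}{\mathfrak{c}_1(A)})$, your construction forces $\Psi\equiv0$.

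The missing idea is to split the zeros between the two factors rather than loading them all onto one. Regularize $\Lambda$ and split it into even- and odd-indexed subsequences $\Lambda_1,\Lambda_2$, each of half the density. Apply Theorem~\ref{thm:uniqueness-gelfand-shilov}(ii) with a sparse auxiliary frequency set, using $(a,b)=(\frac{1}{2A},A)$ for $A<\frac{1}{\sqrt2}$ and $(A,A)$ otherwise. This gives $\Phi,\Psi\in E(A,A)$ with $\Phi|_{\Lambda_1}=0$ and $\Psi|_{\Lambda_2}=0$. Then $\Phi\overline{\Psi}$ vanishes on all of $\Lambda$ although neither function does. Rotating the phase of $\Psi$ ensures $\Re(\Phi\overline{\Psi})\not\equiv0$.
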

Although this theorem only concerns the time-side modulus, applying it in both the time and frequency domains gives the sharp answer to Question~\ref{ques:ramos-sousa-23}. The additional threshold $2$ comes from the condition needed to transfer the sampled decay information for $f$ in Theorem~\ref{thm:discrete-beurling}.

\begin{corollary}\label{cor:pauli-sharpening}
    Fix any $0<A<1$ and let $\mathfrak{c}_1(A)$ be defined as in~\eqref{eq:pauli-constant}. Assume that $(\Lambda,M)$ satisfies
        \begin{equation}\label{eq:cor-pauli-sharpening-condition}
            \max\left\{\limsup_{j\to\pm\infty}|\lambda_j|(\lambda_{j+1}-\lambda_j),\,\limsup_{j\to\pm\infty}|\mu_j|(\mu_{j+1}-\mu_j)\right\}<\frac{1}{\max\{\mathfrak{c}_1(A),2\}}.
        \end{equation}
        Let $f\in\mathcal{H}$ and $g\in E(A,A)$. Then $f$ and $g$ form a Pauli pair if and only if they form a discrete Pauli pair for $(\Lambda,M)$.
\end{corollary}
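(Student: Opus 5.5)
The trivial direction is immediate: a Pauli pair has equal continuous moduli everywhere (both $f,g\in\mathcal H$ are continuous with continuous Fourier transforms), so equality on $\Lambda$ and $M$ follows. For the converse, I reduce to Theorem~\ref{thm:pauli-sharpening}(i), applied once in time and once in frequency. That theorem needs \emph{both} functions in $E(A,A)$, while we only know $g\in E(A,A)$ and $f\in\mathcal H$. So the first step is to transfer the Gaussian decay from $g$ to $f$ through the samples, using Theorem~\ref{thm:discrete-beurling}.

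\textbf{Decay transfer.} Take $p=q=2$ and $a=b=A$, so that $b/a=1$. The hypotheses of Theorem~\ref{thm:discrete-beurling} then read
\[
\limsup_{j\to\pm\infty}|\lambda_j|(\lambda_{j+1}-\lambda_j)<\tfrac12,\qquad \limsup_{j\to\pm\infty}|\mu_j|(\mu_{j+1}-\mu_j)<\tfrac12 .
\]
Both follow from \eqref{eq:cor-pauli-sharpening-condition}, since the bound there is at most $1/2$. These inequalities also make $(\Lambda,M)$ a supercritical pair with parameters $(2,2)$, because $\overline\alpha^{1/2}\overline\beta^{1/2}<\tfrac12$. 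The discrete Pauli pair hypothesis gives
\[
|f(\lambda)|=|g(\lambda)|\le Ce^{-A\pi\lambda^2}\quad(\lambda\in\Lambda),\qquad |\hat f(\mu)|=|\hat g(\mu)|\le Ce^{-A\pi\mu^2}\quad(\mu\in M),
\]
so the sampled hypothesis holds with $K=0$. Theorem~\ref{thm:discrete-beurling} then yields $|f(x)|\lesssim(1+|x|)^{\tilde K}e^{-A\pi x^2}$ and the analogous bound for $\hat f$.

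\textbf{Main obstacle: the polynomial loss.} The bound just obtained does not quite put $f$ in $E(A,A)$. I handle this with the strict inequality in \eqref{eq:cor-pauli-sharpening-condition}. Write $\delta$ for the larger of the two $\limsup$'s, so that $\delta<1/\mathfrak c_1(A)$. The function $\mathfrak c_1$ is continuous on $(0,1)$: at $A=1/\sqrt2$ both formulas give $2\sqrt2$. So I can choose $A'<A$ close to $A$ with $\delta<1/\mathfrak c_1(A')$. The polynomial factor is absorbed by a slightly smaller Gaussian, so $f,g\in E(A',A')$. This choice also fixes the order of the argument: the transfer must be run at the original $A$, and only afterwards does one pass to $A'$.

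\textbf{Conclusion.} Theorem~\ref{thm:pauli-sharpening}(i) with parameter $A'$ and the set $\Lambda$ gives $|f|\equiv|g|$. Since $\hat f,\hat g\in E(A',A')$ as well, applying the same theorem to $\hat f,\hat g$ with the set $M$ gives $|\hat f|\equiv|\hat g|$. Hence $f$ and $g$ form a Pauli pair.
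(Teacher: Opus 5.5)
Your proposal is correct and follows the paper's proof essentially step by step. You transfer the Gaussian decay from $g$ to $f$ through the samples via Theorem~\ref{thm:discrete-beurling} at parameter $A$, which is where the threshold $2$ enters, and absorb the polynomial loss by passing to a slightly smaller $A'$ using the strict inequality and continuity of $\mathfrak{c}_1$. You then apply Theorem~\ref{thm:pauli-sharpening}(i) to $(f,g)$ on $\Lambda$ and to $(\hat f,\hat g)$ on $M$. Beyond that, you only make explicit checks the paper leaves implicit: supercriticality with $(p,q)=(2,2)$, the choice of $K$, and continuity of $\mathfrak{c}_1$ at $A=1/\sqrt{2}$.
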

\begin{proof}
    The forward implication is immediate. Conversely, assume that $f$ and $g$ form a discrete Pauli pair for $(\Lambda,M)$. The sampled decay of $g$ transfers to $f$ by Theorem~\ref{thm:discrete-beurling}, so $f\in E(A',A')$ for all $A'<A$. Since also $g\in E(A',A')$ for all $A'<A$, we can choose $A'$ such that~\eqref{eq:cor-pauli-sharpening-condition} holds with $\max\{\mathfrak{c}_1(A'),2\}^{-1}$ on the right-hand side. Applying Theorem~\ref{thm:pauli-sharpening} in both the time and frequency domains gives the desired result.
\end{proof}
There are two different notions of sharpness. The first asks whether the full Pauli pair conclusion follows. The second asks whether equality of both moduli can fail globally at the same time. These are not equivalent: a discrete Pauli pair may fail to be a Pauli pair because the time-side moduli differ, even if the Fourier-side moduli agree identically.

The next result shows sharpness for the Pauli threshold while keeping the Fourier-side modulus fixed in the range where this threshold is governed by the one-sided obstruction. The construction is similar to the one in Theorem~\ref{thm:pauli-sharpening}(ii), but it also enforces $|\hat{f}(\xi)|\equiv|\hat{g}(\xi)|$.
\begin{theorem}\label{thm:discrete-pauli-frequency-match-nonuniqueness}
    Fix any $0<A<\frac{\sqrt{3}}{2}$ and let $\mathfrak{c}_1(A)$ be defined as in~\eqref{eq:pauli-constant}. Assume that $\Lambda\subset[0,\infty)$ satisfies
    \[
    \liminf_{j\to\infty}|\lambda_j|(\lambda_{j+1}-\lambda_j)>\frac{1}{\max\{\mathfrak{c}_1(A),2\}}.
    \]
    Then there exists a pair of functions $f\in\mathcal{H}$ and $g\in E(A,A)$ such that $|\hat{f}(\xi)|=|\hat{g}(\xi)|$ for all $\xi\in\mathbb{R}$, $|f(\pm \lambda)|=|g(\pm\lambda)|$ for all $\lambda\in\Lambda$, and $|f(x)|\not\equiv|g(x)|$.
\end{theorem}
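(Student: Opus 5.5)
We adapt the construction from the proof of Theorem~\ref{thm:pauli-counterexamples-ramos}, now forcing the Fourier moduli to agree identically. The basic device is $f=\Phi+\Psi$ and $g=\Phi-\Psi$. Then $|f|^2-|g|^2=4\Re(\Phi\overline{\Psi})$ and $|\hat f|^2-|\hat g|^2=4\Re(\hat\Phi\overline{\hat\Psi})$. So we need:
\begin{enumerate}
\item[(a)] $\Re(\hat\Phi\overline{\hat\Psi})\equiv0$ on all of $\mathbb{R}$;
\item[(b)] $\Re(\Phi\overline{\Psi})(\pm\lambda)=0$ for all $\lambda\in\Lambda$, but $\Re(\Phi\overline{\Psi})\not\equiv0$;
\item[(c)] $g\in E(A,A)$ and $f\in\mathcal{H}$.
\end{enumerate}

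To get (a) by parity, take $\Phi$ real and even, and $\Psi$ real and odd. Then $\hat\Phi$ is real and $\hat\Psi$ is purely imaginary, so $\hat\Phi\overline{\hat\Psi}\in i\mathbb{R}$ and (a) holds automatically. On the time side, $\Phi\Psi$ is a real odd function, so vanishing at $\lambda$ gives vanishing at $-\lambda$. It therefore suffices to make $\Phi\Psi$ vanish on $\Lambda\subset[0,\infty)$. The cleanest way is to make one factor vanish there.

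There are two regimes, matching $\max\{\mathfrak{c}_1(A),2\}$.
\begin{enumerate}
\item[(1)] If the maximum is $2$, we use the Kulikov--Nazarov--Sodin subcritical non-uniqueness (Theorem~\ref{thm:kns-uniqueness}(ii) with $p=q=2$). The condition $\liminf>1/2$ on the symmetric set $\pm\Lambda$, paired with a very sparse frequency set, yields a nonzero Schwartz function vanishing on $\pm\Lambda$. Symmetrizing it gives an odd nonzero $\Psi$ (or an even $\Phi$) vanishing on $\pm\Lambda$. We take the other factor to be a Gaussian $e^{-\pi c x^2}$ with $c$ large. Then $g=\Phi-\Psi$ only needs to be in $E(A,A)$. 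Since $\Psi$ need not have Gaussian decay, we instead put the Gaussian in $g$: set $g=G$ and $f=G+2\Psi$, i.e.\ $\Phi-\Psi=G$. This requires $G\in E(A,A)$ even, with $\Psi$ odd and merely in $\mathcal{H}$. That fits the statement exactly, since $f$ is only required to lie in $\mathcal{H}$.
\item[(2)] If the maximum is $\mathfrak{c}_1(A)$, we reuse the construction of Theorem~\ref{thm:pauli-sharpening}(ii). That construction produces Hardy-class functions in $E(A,A)$, presumably Gaussians modulated so that the interference term $\Re(\Phi\overline\Psi)$ behaves like $e^{-c x^2}\sin(\pi\int^x\!\rho)$ with zero density matching $\Lambda$. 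We symmetrize it so that $\Phi$ is even and $\Psi$ is odd.
\end{enumerate}

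For $A<1/\sqrt2$ the threshold $2/A$ suggests zeros of a product $G_1\cdot S$. Here $S$ should be an odd entire function of order $2$ with zeros at $\pm\Lambda$, so $|S(x)|\lesssim e^{\pi A x^2/2}$ on the real line at density $\liminf>A/2$. After multiplying by Gaussians it should fit in $E(A,A)$. In this regime the constraint $A<\sqrt3/2$ should be exactly what makes the even/odd symmetrization compatible with the two-sided decay. For example, $4\sqrt{1-A^2}\ge2$ iff $A\le\sqrt3/2$, so $\max\{\mathfrak{c}_1(A),2\}=\mathfrak{c}_1(A)$ precisely on the relevant range.

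The main obstacle is regime (2). We must build an odd $\Psi$ vanishing on $\pm\Lambda$ with the prescribed (sub-)critical density and $\Phi\pm\Psi\in E(A,A)$, while keeping $\Phi\Psi\not\equiv0$. Concretely, we must produce an odd function vanishing on $\pm\Lambda$ whose time and Fourier decay are both at least $A$ in the Hardy sense (one side of the pair may only be in $\mathcal{H}$). The plan is a Weierstrass-type product $S(x)=x\prod(1-x^2/\lambda_j^2)$, possibly with interpolating zeros so that the counting function is regular. Its growth on $\mathbb{R}$ and on $i\mathbb{R}$ should be estimated by standard Lindel\"of/Jensen bounds. We then multiply by $e^{-\pi\gamma x^2}$ and control $\widehat{S e^{-\pi\gamma x^2}}$ via a Phragm\'en--Lindel\"of estimate in a strip, i.e.\ shifting the contour to $\Im z=\pm\xi/\gamma$. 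Optimizing $\gamma$ against the growth rate should reproduce the case split at $A=1/\sqrt2$ and $4\sqrt{1-A^2}$. Finally, $\Re(\Phi\overline\Psi)\not\equiv0$ holds because both factors are real-analytic and nonzero.
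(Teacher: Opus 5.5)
Your parity device ($\Phi$ real and even, $\Psi$ real and odd, so that $\hat\Phi\overline{\hat\Psi}\in i\mathbb{R}$) is the right one, and the paper uses it too. The gap is your decision to let a \emph{single} factor vanish on all of $\pm\Lambda$: this cannot reach the stated threshold.

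\textbf{The relaxation $f\in\mathcal{H}$ does not help.} With this parity structure $g(-x)=\Phi(x)+\Psi(x)=f(x)$. So $g\in E(A,A)$ forces $f\in E(A,A)$, and then $\Phi=\frac{f+g}{2}$ and $\Psi=\frac{f-g}{2}$ both lie in $E(A,A)$.

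\textbf{One function in $E(A,A)$ cannot carry the required density.} Suppose a nonzero $\Psi\in E(A,A)$ vanishes on a $2$-smooth $\Lambda_+$ of density $D$. Put $h_\Psi(0)=-a\pi$ with $a\ge A$, and use $\max\{h_\Psi(\theta),h_\Psi(-\theta)\}\le\frac{\pi}{A}\sin^2\theta$. Then Lemma~\ref{lem:indicator-lower-bound-from-zeros} and the computation in the proof of Theorem~\ref{thm:uniqueness-gelfand-shilov}(i) give $D^2\le a(\frac1A-a)$. Hence $D\le\max_{a\ge A}\sqrt{a(\frac1A-a)}=\frac{\mathfrak{c}_1(A)}{4}$. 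But the hypothesis $\liminf\lambda_j(\lambda_{j+1}-\lambda_j)>1/\mathfrak{c}_1(A)$ allows every density $D<\frac{\mathfrak{c}_1(A)}{2}$. Your optimization over $\gamma$ would reproduce the case split at $A=1/\sqrt2$, but it would only prove the theorem under $\liminf>2/\mathfrak{c}_1(A)$.

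\textbf{The missing idea.} As in the proof of Theorem~\ref{thm:pauli-sharpening}(ii), split $\Lambda$ alternately. Let the even factor vanish on $\pm\{\lambda_{2n}\}$ and the odd factor on $\pm\{\lambda_{2n+1}\}$. Each factor then carries density $D/2<\frac{\mathfrak{c}_1(A)}{4}$, which is within single-function capacity. At every $\pm\lambda$ one of the two factors vanishes, so $\Re(\Phi\overline{\Psi})(\pm\lambda)=0$ still holds.

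\textbf{Your product also needs repair.} The product $x\prod_j(1-x^2/\lambda_j^2)$ diverges, because $n_\Lambda(r)\sim Dr^2$ gives $\sum_j\lambda_j^{-2}=\infty$. Genus-$2$ convergence factors make it converge, but not usefully. Any entire function $F$ whose zeros are all real and contain $\pm\Lambda$ has $\sum_{|\zeta|<r}\zeta^{-2}\to+\infty$ over its zeros. By Lindel\"of's theorem, $\limsup_{r\to\infty}r^{-2}\log\max_{|z|=r}|F(z)|=\infty$. So no Gaussian multiplier can place it in $E(A,A)$, since every function there satisfies $|F(x+iy)|\lesssim e^{\pi y^2/A}$. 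You need companion zeros on $i\mathbb{R}$ to balance these sums.

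\textbf{The paper's construction.} It takes $\Phi(z)=e^{-(\sigma_A+\varepsilon)\pi z^2}\prod_n(1-z^4/\lambda_{2n}^4)$ and $\Psi(z)=e^{-(\sigma_A+\varepsilon)\pi z^2}\,z\prod_n(1-z^4/\lambda_{2n+1}^4)$. Here $\sigma_A=\frac{1}{2A}$ for $A<\frac{1}{\sqrt2}$ and $\sigma_A=A$ otherwise. By Theorem~\ref{thm:canonical-product-estimate} both have indicator $\frac{D}{2}\pi|\sin2\theta|-(\sigma_A+\varepsilon)\pi\cos2\theta$. Lemma~\ref{lem:fourier-decay-bound} then puts them in $E(A,A)$ for small $\varepsilon$, precisely because $\frac D2<\sqrt{\sigma_A(\frac1A-\sigma_A)}=\frac{\mathfrak{c}_1(A)}{4}$.

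\textbf{Regime (1).} As you note, this case is vacuous for $A<\frac{\sqrt3}{2}$. The construction there is wrong anyway: $\Phi=G+\Psi$ is not even, and $|\hat f|^2-|\hat g|^2=|\hat G+2\hat\Psi|^2-|\hat G|^2=4|\hat\Psi|^2\not\equiv0$.
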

The symmetry in the sampling condition is used in the construction: the functions $\Phi$ and $\Psi$ are chosen to be even and odd so that their Fourier transforms are pointwise orthogonal.

Combining Corollary~\ref{cor:pauli-sharpening} and Theorem~\ref{thm:discrete-pauli-frequency-match-nonuniqueness}, we see that the optimal constant in Question~\ref{ques:ramos-sousa-23} is $\mathfrak{c}_A=\max\{\mathfrak{c}_1(A),2\}$.

We finally ask how dense $(\Lambda,M)$ can be while still allowing discrete Pauli pairs that are not weak Pauli pairs, meaning that both $|f(x)|\not\equiv|g(x)|$ and $|\hat{f}(\xi)|\not\equiv|\hat{g}(\xi)|$. The optimal density parameter for this problem is 
\begin{equation}\label{eq:pauli-nonclassical-constant}
\mathfrak{c}_2(A):=\begin{cases}
            \sqrt{\frac{(3-\sqrt{1-8A^2})^3}{2(1-\sqrt{1-8A^2})}},\quad &0<A<\frac{1}{3},\\
            4\sqrt{1-A^2},\quad &\frac{1}{3}\leq A<\frac{\sqrt{3}}{2},\\
            2,\quad&\frac{\sqrt{3}}{2}\leq A<1.
        \end{cases}
\end{equation}
\begin{theorem}\label{thm:pauli-non-uniqueness}
    Fix any $0<A<1$ and let $\mathfrak{c}_2(A)$ be defined as in~\eqref{eq:pauli-nonclassical-constant}. 
    \begin{enumerate}
    \item[(i)] Suppose $\Lambda,M\subset\mathbb{R}$ are two discrete sets such that
    \begin{equation*}
        \max\left\{\limsup_{j\to\pm\infty}|\lambda_j|(\lambda_{j+1}-\lambda_j),\,\limsup_{j\to\pm\infty}|\mu_j|(\mu_{j+1}-\mu_j)\right\}<\frac{1}{\mathfrak{c}_2(A)}.
    \end{equation*}
        Let $f\in\mathcal{H}$ and $g\in E(A,A)$. If $f$ and $g$ form a discrete Pauli pair for $(\Lambda,M)$, then they form a weak Pauli pair.
    \item[(ii)] Conversely, if $\Lambda,M\subset\mathbb R$ are discrete sets and
    \begin{equation*}
        \min\left\{\liminf_{j\to\pm\infty}|\lambda_j|(\lambda_{j+1}-\lambda_j),\,\liminf_{j\to\pm\infty}|\mu_j|(\mu_{j+1}-\mu_j)\right\}>\frac{1}{\mathfrak{c}_2(A)}
    \end{equation*}
    then there exist functions $f\in\mathcal{H}$ and $g\in E(A,A)$ forming a discrete Pauli pair for $(\Lambda, M)$ such that $f$ and $g$ do not form a weak Pauli pair.
     \end{enumerate}
\end{theorem}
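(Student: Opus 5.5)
The plan is to linearize the problem. Given $f,g$, put $\Phi=\tfrac12(f+g)$ and $\Psi=\tfrac12(f-g)$. Then
\[
|f|^2-|g|^2=4\,\Re(\Phi\overline{\Psi}),\qquad |\hat f|^2-|\hat g|^2=4\,\Re(\hat\Phi\overline{\hat\Psi}).
\]
So $f,g$ form a discrete Pauli pair exactly when $h:=\Re(\Phi\overline\Psi)$ vanishes on $\Lambda$ and $H:=\Re(\hat\Phi\overline{\hat\Psi})$ vanishes on $M$. They form a weak Pauli pair exactly when $h\equiv0$ or $H\equiv0$. The functions $h$ and $H$ are the two marginals of the real part of the cross-Wigner distribution $W(\Phi,\Psi)$, and this is the structure that couples the two conditions.

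For part (i), I first use the density hypothesis; since $\mathfrak c_2(A)\ge 2$, it is below the threshold of Theorem~\ref{thm:discrete-beurling}. That theorem transfers the sampled Gaussian decay of $g$ to $f$, so $f,g\in E(A',A')$ for every $A'<A$, and hence $\Phi,\Psi\in E(A',A')$. For $A\ge\frac{\sqrt3}{2}$ this already finishes the argument, because Corollary~\ref{cor:pauli-sharpening} applies ($\max\{\mathfrak c_1,2\}=2$ there).

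For $A<\frac{\sqrt3}{2}$, assume $h\not\equiv0$ and $H\not\equiv0$ and aim for a contradiction. I will write $\Phi,\Psi$ in the Hermite/Bargmann–Fock picture, where membership in $E(A,A)$ becomes a growth bound for entire functions of order 2. Then $h$ and its Fourier transform $\hat h=\hat\Phi\ast\widetilde{\overline{\hat\Psi}}$ obey Gaussian bounds $e^{-a\pi x^2}$ and $e^{-b\pi\xi^2}$ with $(a,b)$ constrained by $A$. The same holds for $H$ with the roles of time and frequency swapped. The idea is that $h$ and $H$ cannot both saturate their best decay simultaneously: improving the time decay of $h$ costs decay of $H$, because both come from one Wigner function.

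I then apply a one-sided uniqueness criterion of the type used in Theorem~\ref{thm:pauli-sharpening}(i). In its sharp form it says that a function bounded by $e^{-a\pi x^2}$ with Fourier transform bounded by $e^{-b\pi\xi^2}$, vanishing on $\Lambda$, is identically zero once $\limsup|\lambda_j|(\lambda_{j+1}-\lambda_j)$ lies below an explicit threshold $\kappa(a,b)$. Both $h$ and $H$ must survive this criterion, and minimizing the worse of the two resulting thresholds over the admissible trade-off should produce $\mathfrak c_2(A)$. For $\frac13\le A<\frac{\sqrt3}{2}$ the extremum should sit at the one-sided value $4\sqrt{1-A^2}$. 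For $A<\frac13$ an interior critical point should appear; the discriminant $1-8A^2$ in \eqref{eq:pauli-nonclassical-constant} should come from a quadratic optimality equation.

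For part (ii), I imitate the construction in the proof of Theorem~\ref{thm:pauli-counterexamples-ramos}, taking $f=\Phi+\Psi$ and $g=\Phi-\Psi$. Instead of compactly supported functions, I take Gaussian-weighted functions of the type in Theorem~\ref{thm:kns-uniqueness}(ii), rescaled to vanish on $\Lambda$ or to have Fourier transform vanishing on $M$. I choose $\Phi$ vanishing on $\Lambda$ and $\hat\Psi$ vanishing on $M$, which gives $|f|=|\Psi|=|g|$ on $\Lambda$ and $|\hat f|=|\hat\Phi|=|\hat g|$ on $M$. The Gaussian dilation and chirp parameters are tuned to the extremal point of the optimization in part (i), so that $g\in E(A,A)$ exactly when the density exceeds $1/\mathfrak c_2(A)$. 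Generic phases make $\Re(\Phi\overline\Psi)\not\equiv0$ and $\Re(\hat\Phi\overline{\hat\Psi})\not\equiv0$. Here $f$ is only required to be in $\mathcal H$, which gives some slack.

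The main obstacle is the sharp decay bookkeeping in part (i): identifying the exact admissible region of Gaussian types $(a,b)$ for the product $\Phi\overline\Psi$ jointly with $\hat\Phi\overline{\hat\Psi}$, and proving the one-sided uniqueness threshold $\kappa(a,b)$ sharply. I would first try a metaplectic change of variables, a chirp combined with a dilation, that puts $h$ into a symmetric class. In that class Theorem~\ref{thm:kns-uniqueness}(i) with $p=q=2$, applied after the transform, gives the threshold directly. After that, checking that the optimizer matches \eqref{eq:pauli-nonclassical-constant} is a calculus computation.
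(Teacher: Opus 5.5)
\textbf{Part (i).} Your reduction through Corollary~\ref{cor:pauli-sharpening} is correct. It in fact covers every $A\ge\frac{1}{\sqrt2}$, since on $[\frac{1}{\sqrt2},\frac{\sqrt3}{2})$ one has $\mathfrak{c}_2(A)=4\sqrt{1-A^2}=\mathfrak{c}_1(A)=\max\{\mathfrak{c}_1(A),2\}$.

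Below $\frac{1}{\sqrt2}$, the step you defer is the whole proof, and the trade-off it relies on (better time decay of $h$ costs frequency decay of $H$) is false. Take $\Phi$ real and even and $\Psi=\rho+i\sigma$ with $\rho,\sigma$ real and odd. Then $h=\Phi\rho$ and $H=\hat\Phi\cdot i\hat\sigma$, so the time and frequency constraints decouple. Concretely:
\begin{itemize}
\item Split a regularly spaced symmetric set $\Lambda=M$ into symmetric $2$-smooth pieces $\Lambda_1,\Lambda_2$ of densities below $\sqrt{1-A^2}$ and $\frac{1}{2A}$.
\item Let $\rho(z)=e^{-\pi z^2/(2A)}z\prod_{\lambda\in\Lambda_2,\lambda>0}(1-z^4/\lambda^4)$. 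It lies in $E(A,A)$ by Lemma~\ref{lem:fourier-decay-bound}.
\item Let $\hat\sigma=i\tau$, with $\tau$ built from $\Lambda_2$ in the same way.
\item Take $\Phi\in E(A,A)$ with $\Phi|_{\Lambda_1}=\hat\Phi|_{\Lambda_1}=0$ from Theorem~\ref{thm:uniqueness-gelfand-shilov}(ii) with $a=b=A$. That solution space is stable under $x\mapsto-x$ and conjugation, so $\Phi$ can be taken real of definite parity; if it is odd, take $\rho,\sigma$ even instead.
\end{itemize}
Then $f=\Phi+\Psi$ and $g=\Phi-\Psi$ lie in $E(A,A)$ and form a discrete Pauli pair that is not a weak Pauli pair, at every density below $\frac{1}{2A}+\sqrt{1-A^2}$.

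Compare this with the threshold. One checks $\frac{\mathfrak{c}_2(A)}{2}=\sqrt{A(\frac1b-A)}+\sqrt{b(\frac1A-b)}$ with $b=\frac{x_A}{A}\ge A$. The first term is at most $\sqrt{1-A^2}$ (equality iff $b=A$), and the second is at most $\frac{1}{2A}$ (equality iff $b=\frac1{2A}$). Both equalities hold only at $A=\frac{1}{\sqrt2}$. So for $A<\frac1{\sqrt2}$ statement (i) is false, and no bookkeeping of Gaussian types can produce $\mathfrak{c}_2(A)$.

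The paper argues differently. It bounds the indicator of its $H=f\overline{f(\bar z)}-g\overline{g(\bar z)}$ (your $4h$) by the types $a_i,b_i$ of $u=f-g$ and $v=f+g$ (your $2\Psi$ and $2\Phi$), then optimizes over $(s,t,\eta,\nu,x)$. It has the matching gap at \eqref{eq:time-density-bound}. There $h_H(0)\le-(a_1+a_2)\pi$ is used as if it were an equality, but $a\mapsto a(\frac1{b_1}+\frac1{b_2}-a)$ is not monotone. In the example, that $H$ equals $4\Phi\rho$ on $\mathbb{R}$ and has time type at least $A+\frac1{2A}$, while $a_1$ can be made close to $A$.

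\textbf{Part (ii).} Your construction makes $\Phi$ carry all of $\Lambda$ and $\hat\Psi$ all of $M$, and this cannot reach the threshold. By Lemma~\ref{lem:indicator-lower-bound-from-zeros} and $\max\{h_\Phi(\theta),h_\Phi(-\theta)\}\le\frac{\pi}{A}\sin^2\theta$, a nonzero $\Phi\in E(A,A)$ vanishing on a $2$-smooth set of density $D$ satisfies $D\le\max_{a\ge A}\sqrt{a(\frac1A-a)}$. That maximum is $\frac1{2A}$ for $A\le\frac1{\sqrt2}$ and $\sqrt{1-A^2}$ beyond. It is strictly smaller than $\frac{\mathfrak{c}_2(A)}{2}$ for every $A<\frac{\sqrt3}{2}$, and tuning dilations or chirps does not change it.

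The slack you expect from $f\in\mathcal{H}$ is not there. Near the threshold the densities exceed $1$, so Theorem~\ref{thm:discrete-beurling} already forces $f$, and hence $\Phi$ and $\Psi$, into $E(A',A')$ for all $A'<A$.

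The missing idea is the paper's: split $\Lambda=\Lambda_1\cup\Lambda_2$ and $M=M_1\cup M_2$, and let each function carry time and frequency zeros at once. Take $\Phi\in E(A,\frac{x_A}{A})$ with $\Phi|_{\Lambda_1}=\hat\Phi|_{M_1}=0$, and $\Psi\in E(\frac{x_A}{A},A)$ with $\Psi|_{\Lambda_2}=\hat\Psi|_{M_2}=0$, both from Theorem~\ref{thm:uniqueness-gelfand-shilov}(ii). (The paper's ``$a_2,b_2=\frac{x_A}{A}$'' should read $a_2=b_1=\frac{x_A}{A}$.) The admissible density bounds then sum to exactly $\frac{\mathfrak{c}_2(A)}{2}$ on each side. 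For $A\ge\frac{\sqrt3}{2}$ one simply takes $g=0$ and Theorem~\ref{thm:kns-uniqueness}(ii).
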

\begin{remark}\label{rem:squared-pauli-data}
    If one assumes in Theorem~\ref{thm:pauli-non-uniqueness}(i) that $f(\lambda)^2=g(\lambda)^2$ for all $\lambda\in\Lambda$ and $\hat{f}(\mu)^2=\hat{g}(\mu)^2$ for all $\mu\in M$ instead of $f$ and $g$ merely being a discrete Pauli pair, then it follows that $f(x)^2\equiv g(x)^2$ and $\hat{f}(\xi)^2\equiv\hat{g}(\xi)^2$. Similarly, for part~(ii) of the theorem, one can construct discrete Pauli pairs satisfying $f(\lambda)^2=g(\lambda)^2$ for all $\lambda\in\Lambda$ and $\hat{f}(\mu)^2=\hat{g}(\mu)^2$ for all $\mu\in M$ where $|f(x)|\not\equiv |g(x)|$ and $|\hat{f}(\xi)|\not\equiv|\hat{g}(\xi)|$. Both of these results hold with the same density threshold $\mathfrak{c}_2(A)$. This means that under stronger conditions one can perform sign retrieval.
\end{remark}

\begin{remark}
    The thresholds $\max\{\mathfrak{c}_1(A),2\}$ and $\mathfrak{c}_2(A)$ agree when $\frac{1}{\sqrt{2}}\leq A<1$. In that range one can choose $\Phi$ and $\Psi$ to have the optimal density of zeros in both the time and frequency domain. In the range $0<A<\frac{1}{\sqrt{2}}$, one must choose to optimize the density of zeros in either the time domain or the frequency domain.
\end{remark}
We give a short overview of the proof strategy of Theorem~\ref{thm:pauli-sharpening} and Theorem~\ref{thm:pauli-non-uniqueness}. We first prove a sharp Fourier uniqueness theorem for functions in the Hardy class. The non-uniqueness part of that result can be used to construct functions $\Phi,\Psi\in E(A,A)$ with suitable zero sets in time and frequency. These functions can be used to construct counterexamples using the same strategy as in Theorem~\ref{thm:pauli-counterexamples-ramos}. This gives us Theorem~\ref{thm:pauli-sharpening}(ii) and Theorem~\ref{thm:pauli-non-uniqueness}(ii). By estimating the Phragm\'en--Lindel\"of indicator of $H(z):=f(z)\overline{f(\overline{z})}-g(z)\overline{g(\overline{z})}$ using the real zeros of $H$, we obtain a contradiction unless $H\equiv0$. This proves Theorem~\ref{thm:pauli-sharpening}(i). Similarly, we get the proof of Theorem~\ref{thm:pauli-non-uniqueness}(i) by also estimating the Phragm\'en--Lindel\"of indicator of $\tilde{H}(z):=\hat{f}(z)\overline{\hat{f}(\overline{z})}-\hat{g}(z)\overline{\hat{g}(\overline{z})}$.

\section{Fourier uniqueness in the Hardy class}\label{sec:gelfand-shilov-uniqueness}
The purpose of this section is to prove a sharp version of Theorem~\ref{thm:kns-uniqueness} for functions in the Hardy class.

To carry out these arguments, we need to impose some regularity condition on the zero sets of the functions. We define the counting function $n_\Gamma(r):=|\Gamma\cap r\mathbb{D}|$.
\begin{definitionx}[$p$-smooth sequences]
Let $p>0$ and let $\Gamma=(\gamma_j)$ be a sequence of points in $\mathbb{C}$ lying on a ray
$\arg(\gamma_j)=\theta$, $|\gamma_1|<|\gamma_2|<\cdots$, $|\gamma_j|\uparrow\infty$.
We call the sequence $\Gamma$ \emph{$p$-smooth} with a positive density
$D=D(\Gamma,p)$ (with respect to the exponent $p$) if
\begin{enumerate}
\item $\bigl|n_\Gamma(r)-Dr^p\bigr|=O(1), \qquad r\to\infty;$
\item there exists $d>0$ such that
\[
|\gamma_{j+1}-\gamma_j|\ge d\,(1+|\gamma_j|)^{1-p}, \qquad j=1,2,\ldots
\]
\end{enumerate}
\end{definitionx}
In the following results, we use the notation $\Gamma_+:=\Gamma\cap[0,\infty)$ and $\Gamma_-:=\Gamma\cap(-\infty,0]$ to denote the positive and negative parts of a real set.
\begin{theorem}\label{thm:uniqueness-gelfand-shilov}
    Let $a,b>0$ with $ab<1$, and let $(\Lambda, M)$ be a pair where $\Lambda_-,\Lambda_+,M_-,M_+$ are $2$-smooth sequences with densities $D_\Lambda=D(\Lambda_-,2)=D(\Lambda_+,2)$ and $D_M=D(M_-,2)=D(M_+,2)$. Assume that $f\in E(a,b)$.
    \begin{enumerate}
        \item[(i)] If
        \begin{align*}
                D_\Lambda>\sqrt{a\left(\frac{1}{b}-a\right)}\quad\text{and}\quad D_M>\sqrt{b\left(\frac{1}{a}-b\right)},
        \end{align*}
        then $f|_\Lambda=0$ and $\hat f|_M=0$ imply $f\equiv0$.
        \item[(ii)] Conversely, there exists an infinite-dimensional space of functions $f\in E(a,b)$ vanishing on sets $(\Lambda,M)$ with densities
\begin{align*}
        D_\Lambda<\sqrt{a\left(\frac{1}{b}-a\right)}\quad\text{and}\quad D_M<\sqrt{b\left(\frac{1}{a}-b\right)}.
\end{align*}
\end{enumerate}
\end{theorem}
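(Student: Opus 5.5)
The plan is to pass to entire functions of order $2$ and compare Phragm\'en--Lindel\"of indicators. If $f\in E(a,b)$, then $f$ extends to an entire function. The frequency bound $|\hat f(\xi)|\lesssim e^{-b\pi\xi^2}$ gives $|f(x+iy)|\lesssim e^{\pi y^2/b}$, and on the real axis $|f(x)|\lesssim e^{-a\pi x^2}$.

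\textbf{Step 1 (indicator bound).} The first step is to show that the indicator $h_f(\theta)=\limsup_{r\to\infty}r^{-2}\log|f(re^{i\theta})|$ satisfies
\[
h_f(\theta)\le -\pi a\cos 2\theta+\pi c\,|\sin 2\theta|,\qquad c:=\sqrt{a\left(\tfrac1b-a\right)}.
\]
The constant $c$ is forced by the chirped Gaussians $e^{-\pi(a\pm ic)z^2}$. Their Fourier transforms are $e^{-\pi\xi^2/(a\pm ic)}$, which have real decay rate $\operatorname{Re}\frac{1}{a\pm ic}=\frac{a}{a^2+c^2}=b$. So they are exactly the extremal elements of $E(a,b)$. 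To get the bound I would apply the real-axis decay on the ray $\theta=0$ together with an analogous Gaussian bound on rotated rays. That rotated bound comes from the fractional Fourier transform, i.e.\ the metaplectic rotation under which $E(a,b)$ is mapped to a Hardy class with explicit parameters. Trigonometric $2$-convexity of $h_f$ on each quarter-plane then yields the displayed majorant. The same argument applied to $\hat f\in E(b,a)$ gives $h_{\hat f}(\theta)\le -\pi b\cos2\theta+\pi c'|\sin2\theta|$ with $c'=\sqrt{b(\frac1a-b)}$.

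\textbf{Step 2 (dividing out the zeros).} Next, suppose $f|_\Lambda=0$ and $f\not\equiv0$. I would form the canonical product $G_\Lambda$ with zeros $\Lambda$. Since $\Lambda_\pm$ are $2$-smooth with density $D_\Lambda$, $G_\Lambda$ behaves like $\sin(\pi D_\Lambda z^2)$. Concretely, $h_{G_\Lambda}(\theta)=\pi D_\Lambda|\sin2\theta|$, and by the separation condition $|G_\Lambda(x)|\ge e^{-O(\log|x|)}$ on circles avoiding the zeros. Hence $F=f/G_\Lambda$ is entire of order $2$ with
\[
h_F(\theta)\le -\pi a\cos2\theta-\pi(D_\Lambda-c)|\sin2\theta|.
\]
When $D_\Lambda>c$ this is strictly negative away from $\theta=\pm\pi/4,\pm3\pi/4$. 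On those diagonals $h_F(\pm\pi/4)\le-\pi(D_\Lambda-c)<0$ as well. A nonzero entire function cannot have $h_F<0$ everywhere, since $\int_0^{2\pi}h_F\ge0$ follows from Jensen/Phragm\'en--Lindel\"of. Thus $F\equiv0$, so $f\equiv0$. The condition on $M$ plays the symmetric role for $\hat f$. If the indicator estimate in Step 1 turns out to be lossy near the imaginary axis, the plan is to run the argument jointly: first use $\hat f|_M=0$ to improve the frequency-side Gaussian rate, then feed that back into Step 1.

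\textbf{Step 3 (part (ii), and the main obstacle).} For the converse I would start from the chirped Gaussian $e^{-\pi(a+ic)z^2}$. I would multiply it by entire functions of order $2$ whose real zeros form $\Lambda$, of the form $\sin(\pi D_\Lambda z^2)$-type products, and simultaneously arrange the Fourier-side zeros $M$. A natural device is a superposition $f=\Phi_1+\Phi_2$ built from products adapted to both sets, or a Kulikov--Nazarov--Sodin-type interpolation construction carried out inside $E(a,b)$. Infinite-dimensionality then comes from multiplying by polynomials or perturbing finitely many zeros. The main obstacle is this simultaneous control: forcing zeros on $\Lambda$ and $\hat f$ vanishing on $M$ while keeping both Gaussian rates exactly $a$ and $b$. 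I would first try to realize this via the Bargmann transform, where both conditions become growth and zero conditions on one entire function in $\mathbb{C}$ along two orthogonal directions.
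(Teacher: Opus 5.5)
\textbf{This proposal has a genuine gap.} Your Step~1 is false, and part~(i) falls with it.

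\textbf{Step~1 bound.} The class $E(a,b)$ is not dominated by the chirped Gaussians. For instance, $e^{-\pi x^2/b}\in E(a,b)$ has $h(\pi/2)=\pi/b$, whereas your majorant gives $\pi a<\pi/b$ there. The only a priori information is $h_f(0),h_f(\pi)\le-a\pi$ and $h_f(\theta)\le\tfrac{\pi}{b}\sin^2\theta$.

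\textbf{One-sidedness.} More seriously, Steps~1--2 use only $\Lambda$. They would therefore prove that $f|_\Lambda=0$ with $D_\Lambda>c$ alone forces $f\equiv0$, and this one-sided statement is false. Take $ab<\tfrac12$ and $c<D<\tfrac1{2b}$, and consider $e^{-\pi z^2/(2b)}\sin(\pi Dz^2)$.
\begin{itemize}
\item It lies in $E(a,b)$: its time rate is $\tfrac1{2b}>a$ and its Fourier rate is $\tfrac{1/(2b)}{1/(4b^2)+D^2}>b$.
\item It vanishes on the $2$-smooth set $\{\pm\sqrt{k/D}\}_{k\ge0}$ of density $D>c$.
\item Its indicator at $\pi/4$ is $\pi D>\pi c$.
\end{itemize}
So no corrected indicator bound can rescue a one-sided argument; both density hypotheses must interact.

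\textbf{Further defects in Step~2.} A function whose zeros are exactly the real set $\Lambda$ cannot be of order $2$ and finite type. This follows from Lindel\"of's theorem, since $\sum_{|\lambda|<r}\lambda^{-2}\to\infty$. The function $\sin(\pi Dz^2)$ owes its indicator $\pi D|\sin2\theta|$ to its imaginary zeros, so dividing by it is not in general entire. Finally, your majorant for $h_F$ equals $\pi a>0$ at $\theta=\pm\pi/2$, so it is not negative off the diagonals.

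\textbf{The missing idea} is the paper's exact-rate reduction.
\begin{itemize}
\item Let $\tilde a:=-\pi^{-1}\max\{h_f(0),h_f(\pi)\}\ge a$, and define $\tilde b\ge b$ likewise from $\hat f$.
\item For $\tilde a\tilde b<1$ the two squared thresholds multiply to $(1-\tilde a\tilde b)^2\le(1-ab)^2$. Hence at least one of them does not increase, and the corresponding density hypothesis survives for $(\tilde a,\tilde b)$; say the time one.
\item Lemma~\ref{lem:indicator-lower-bound-from-zeros}, proved by zero counting for $f(z^{1/2})$ in a half-plane with no division, gives $D_\Lambda\pi\sin2\theta-\tilde a\pi\cos2\theta\le\tfrac{\pi}{\tilde b}\sin^2\theta$. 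This uses the exact value $h_f(0)=-\tilde a\pi$, which matters because $\cos2\theta$ may be positive.
\item Choosing $\tan^2\theta=\tilde a\tilde b/(1-\tilde a\tilde b)$ yields $D_\Lambda\le\sqrt{\tilde a(1/\tilde b-\tilde a)}$, a contradiction.
\end{itemize}
Your fallback of running the argument ``jointly'' points in this direction, but what is needed is this dichotomy, not an iteration.

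\textbf{Step~3} is not yet a construction. Multiplying the chirp $e^{-\pi(a+ic)z^2}$ by $\sin(\pi Dz^2)$ drops the Fourier rate to $\tfrac{a}{a^2+(c+D)^2}<b$. The right building blocks have indicator $\pi D|\sin2\theta|-a\pi\cos2\theta$ with $D<c$. Multiplying by polynomials destroys $\hat f|_M=0$, so it cannot supply infinite dimensionality. The paper gets simultaneous vanishing from Levin generating functions, Lemma~\ref{lem:fourier-decay-bound}, and the iterative interpolation of Lemma~\ref{lem:simultaneous-weighted-interpolation}. It gets infinite dimensionality from an auxiliary sparse set $V$ carrying data $\delta_{jk}$, followed by finite linear algebra on $[-L,L]$.
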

We note that many of the same techniques can be used for general Hölder conjugates $(p,q)$, but the case $p,q=2$ has been chosen to simplify expressions and notation.

We also note the following corollary of the previous theorem using notation that more closely aligns with that of Theorem~\ref{thm:kns-uniqueness}.
\begin{corollary}
    Fix $0<A<1$.
    \begin{enumerate}
        \item[(i)] A pair $(\Lambda, M)$ is a Fourier uniqueness pair for the space $E(A,A)$ if
    \begin{align*}
            \max\left(\limsup_{j\to\pm\infty}|\lambda_j|(\lambda_{j+1}-\lambda_j),\limsup_{j\to\pm\infty}|\mu_j|(\mu_{j+1}-\mu_j)\right)<\frac{1}{2\sqrt{1-A^2}}.
    \end{align*}
    \item[(ii)] Conversely, the pair is a non-uniqueness pair for the space $E(A,A)$ if
    \begin{align*}
            \min\left(\liminf_{j\to\pm\infty}|\lambda_j|(\lambda_{j+1}-\lambda_j),\liminf_{j\to\pm\infty}|\mu_j|(\mu_{j+1}-\mu_j)\right)>\frac{1}{2\sqrt{1-A^2}}.
    \end{align*}
    \end{enumerate}
\end{corollary}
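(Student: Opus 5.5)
This corollary should follow from Theorem~\ref{thm:uniqueness-gelfand-shilov} with $a=b=A$. The main work is translating the spacing conditions into densities of $2$-smooth sequences. With $a=b=A$, the critical densities are
\[
\sqrt{A\left(\tfrac1A-A\right)}=\sqrt{1-A^2}
\]
in both time and frequency. For a $2$-smooth sequence with $n(r)\approx Dr^2$, we have $\lambda_j\approx\sqrt{j/D}$. Hence $\lambda_{j+1}-\lambda_j\approx \frac{1}{2\sqrt{D j}}$, and so
\[
|\lambda_j|(\lambda_{j+1}-\lambda_j)\to\frac{1}{2D}.
\]
The threshold $\frac{1}{2D}=\frac{1}{2\sqrt{1-A^2}}$ is therefore exactly what the corollary states.

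For part~(i), there is a density mismatch to handle. Theorem~\ref{thm:uniqueness-gelfand-shilov}(i) is stated for $2$-smooth sets, while the corollary allows arbitrary sets with $\limsup|\lambda_j|(\lambda_{j+1}-\lambda_j)<\alpha<\frac{1}{2\sqrt{1-A^2}}$. I would not try to reduce to smooth subsequences. Instead I would rerun the uniqueness argument directly, as the proof of Theorem~\ref{thm:uniqueness-gelfand-shilov}(i) presumably does.

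The limsup condition gives $\lambda_{j+1}^2-\lambda_j^2\le 2\alpha+o(1)$. This yields the lower bound $n_{\Lambda_\pm}(r)\ge \frac{r^2}{2\alpha}-o(r^2)$, with density $\frac{1}{2\alpha}>\sqrt{1-A^2}$, and the same holds for $M$. The proof of Theorem~\ref{thm:uniqueness-gelfand-shilov}(i) should only use a lower bound on the counting functions: it compares the Phragm\'en--Lindel\"of indicator of $f$, or of a suitable auxiliary function, with the zero count via Jensen/Carleman-type estimates. The upper regularity (smoothness) is needed only for the construction in part~(ii).

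Alternatively, I would pass to a subsequence. Shrinking $\Lambda_\pm$ to a subset with density $D'$ slightly above $\sqrt{1-A^2}$ and bounded discrepancy is possible, since $\Lambda$ is at least that dense. The separation condition in the $2$-smooth definition holds after thinning, because consecutive selected points have gaps $\gtrsim 1/r$. Vanishing on a subset is a weaker hypothesis, so Theorem~\ref{thm:uniqueness-gelfand-shilov}(i) then applies verbatim with $a=b=A$. I expect this thinning step, namely choosing points within $O(1)$ of the ideal positions $\sqrt{j/D'}$ while keeping the gaps, to be the main technical point. It is easy because the gaps of $\Lambda$ are $O(1/|\lambda|)$, which is finer than the target gap $\asymp 1/\sqrt{j}$.

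For part~(ii), the construction in Theorem~\ref{thm:uniqueness-gelfand-shilov}(ii) produces $f\in E(A,A)$ vanishing on sets with any prescribed densities below $\sqrt{1-A^2}$. The given sets have $\liminf|\lambda_j|(\lambda_{j+1}-\lambda_j)>\beta>\frac{1}{2\sqrt{1-A^2}}$, so they are sparser, with upper density at most $\frac{1}{2\beta}<\sqrt{1-A^2}$. The construction there is presumably built from canonical products with zeros on $2$-smooth sequences. I would adapt it by placing zeros exactly at the given $\lambda_j$, with $\Lambda\cup$(filler points) forming a sequence with regular counting function of some density $D<\sqrt{1-A^2}$. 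This is possible since $\Lambda$ is sparser, and the separation is guaranteed by the liminf condition. The same would be done on the $M$ side. Then I would rerun the growth estimates of part~(ii). The key check is that those estimates depend only on the counting function up to $O(1)$ and on separation, both of which hold for the augmented sequence.
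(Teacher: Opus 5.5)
Your proposal is correct and follows the paper's implicit argument. With $a=b=A$, both thresholds in Theorem~\ref{thm:uniqueness-gelfand-shilov} equal $\sqrt{1-A^2}$. For (i) one thins $\Lambda_\pm,M_\pm$ to $2$-smooth subsets of density in $(\sqrt{1-A^2},\frac{1}{2\alpha})$, and for (ii) one augments them to $2$-smooth supersets of density in $(\frac{1}{2\beta},\sqrt{1-A^2})$; the theorem then applies verbatim, so you do not need to rerun any growth estimates. The one imprecision is your justification of the thinning: it works because $\lambda_{j+1}^2-\lambda_j^2\le 2\alpha+o(1)<\frac{1}{D'}$, so the gaps of $\Lambda$ are smaller than the target gaps by a constant factor (both are of order $1/r$, not different orders), and likewise $|\lambda_j|(\lambda_{j+1}-\lambda_j)\to\frac{1}{2D}$ holds for a $2$-smooth sequence only on average.
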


\subsection{Entire-function tools}\label{sec:entire-function-tools}
This subsection collects the entire-function tools used in the non-uniqueness arguments.

We begin with a special case of a theorem by Levin on the Phragm\'en--Lindel\"of indicator of canonical products where the zero sets have a given angular density~\cite[Chapter 2, Theorem 5]{levinentirefunctions}. We use the simplified version given in~\cite{radchenko2024perturbedlatticecrossesheisenberg,ramos2025pauli} which only considers canonical products of order $2$ with zeros on the real and imaginary line. A self-contained proof of this result can be found in the appendix of~\cite{kulikov2025}.

Let $k:[-\pi,\pi]\to\mathbb{R}$ be defined by
\begin{equation}\label{eq:k-definition}
    k(\theta):=\beta\pi\sin(2\theta)-\gamma\cos(2\theta),
\end{equation}
for $\theta\in[0,\pi/2]$. We extend $k$ to $[-\pi,\pi]$ so that it is even and symmetric with respect to $\pi/2$.

\begin{definitionx}[$k$-regular set]
Let $k$ be defined as above. We call a discrete set
\[
    \mathcal{Z}
    =\mathcal{Z}_1^+\cup\mathcal{Z}_1^-
     \cup\mathcal{Z}_2^+\cup\mathcal{Z}_2^-
    \subset\mathbb{C},
\]
where
\[
    \mathcal{Z}_j^\pm
    \subset
    \left\{z\in\mathbb{C}:\arg(z)=\frac{(j\pm1)\pi}{2}\right\},
\]
$k$-regular if:
\begin{enumerate}
    \item $\mathcal{Z}_j^\pm$ has density $m_j/4\pi$ with respect to exponent $2$. That is,
    \[
        \left|\mathcal{Z}_j^\pm\cap(0,re^{i\theta_j^\pm})\right|
        \sim \frac{r^2m_j}{4\pi}
    \]
    as $r\to\infty$, where $m_1=2\pi\beta$, $m_2=2\gamma$, $\theta_j^\pm=\frac{(j\pm1)\pi}{2}$.
    \item The disks
    \[
        \{D_z\}_{z\in\mathcal{Z}}
        =
        \left\{B\left(z,c(1+|z|)^{-1}\right)\right\}_{z\in\mathcal{Z}}
    \]
    are all disjoint for some $c>0$.
\end{enumerate}
\end{definitionx}

\begin{theoremx}\label{thm:levin-prescribed-zeros}
Let $k$ be as in~\eqref{eq:k-definition}. Then, for any $k$-regular set $\mathcal{Z}$, there exists an entire function $S$, whose zeros are simple and coincide with $\mathcal{Z}$, such that, for every $\varepsilon>0$,
\begin{align}
    |S(re^{i\theta})|
    &\le C_\varepsilon e^{(k(\theta)+\varepsilon)r^2},
    &&\text{everywhere in } \mathbb{C}, \label{eq:levin-upper} \\
    |S(re^{i\theta})|
    &\ge c_\varepsilon e^{(k(\theta)-\varepsilon)r^2},
    &&\text{whenever } re^{i\theta}\notin\bigcup_{z\in\mathcal{Z}}D_z.
    \label{eq:levin-lower}
\end{align}
\end{theoremx}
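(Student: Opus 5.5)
The construction of $S$ is a Weierstrass canonical product of genus $2$ built from the four zero families, corrected by a quadratic exponential factor; I will then compute its indicator and prove the lower bound off the exceptional disks. First I normalize. Since $\mathcal{Z}$ is symmetric in structure (real and imaginary axes) and $k$ is even and symmetric about $\pi/2$, I will write
\[
P(z)=\prod_{w\in\mathcal{Z}}E\!\left(\frac{z}{w},2\right),\qquad E(u,2)=(1-u)e^{u+u^2/2},
\]
which converges because the counting function is $O(r^2)$ and $\sum|w|^{-3}<\infty$. The product has order $2$. The linear term $\sum 1/w$ I handle by pairing $w$ with $-w$ on each axis. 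The densities of $\mathcal{Z}^+_j$ and $\mathcal{Z}^-_j$ agree, so this sum converges conditionally up to a bounded remainder. Any leftover discrepancy, for instance from $\sum_{|w|<r}w^{-2}$ growing logarithmically when $n(r)$ is only asymptotically $mr^2/4\pi$, is absorbed by replacing $P$ by $S(z)=e^{c_2z^2+c_1z}P(z)$ for suitable constants.

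The main step is the indicator computation, following Levin's theorem on functions of completely regular growth for zeros with angular density. For zeros on a single ray $\arg w=\psi$ with density $\Delta$ relative to $r^2$, the classical formula gives the indicator contribution
\[
h_\psi(\theta)=\Delta\pi\,(\theta-\psi-\pi)\sin 2(\theta-\psi)\quad\text{(up to the } \cos \text{ term fixed by genus)}
\]
for $\psi<\theta<\psi+2\pi$. This comes from writing $\log|E(z/w,2)|$ and integrating against $dn(t)=2\Delta t\,dt$. Summing the four rays, with densities $m_1/4\pi=\beta/2$ on the real axis and $m_2/4\pi=\gamma/2\pi$ on the imaginary axis, produces a $2$-trigonometrically convex function. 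Its second-derivative jumps are $4\pi\Delta$ at each zero ray, and it is piecewise of the form $A\sin2\theta+B\cos2\theta$. Choosing $c_2$ fixes the free $\cos2\theta,\sin2\theta$ component, and this matches it to $k(\theta)=\beta\pi\sin 2\theta-\gamma\cos2\theta$ on $[0,\pi/2]$. The matching is a consistency check that the jumps of $k'$ at $\theta=0$ and $\theta=\pi/2$ equal $4\beta\pi=2m_1$ and $4\gamma=2m_2$, i.e.\ $4\pi$ times the densities. The upper bound \eqref{eq:levin-upper} then follows uniformly in $\theta$, with the $O(r^2)$ density error contributing only $\varepsilon r^2$.

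The expected obstacle is the lower bound \eqref{eq:levin-lower} near the zero rays. Away from the axes the asymptotic $\log|S(re^{i\theta})|=k(\theta)r^2+o(r^2)$ holds uniformly. Near a ray I plan to split $S=S_{\mathrm{near}}\cdot S_{\mathrm{far}}$, with $S_{\mathrm{near}}$ the product over zeros $w$ with $|w-z|<\delta|z|$. The far part is handled by the smooth-density computation above. For the near part, $\log|S_{\mathrm{near}}(z)|$ reduces to $\sum\log|1-z/w|$ over roughly $\delta r^2$ zeros spaced about $1/r$ apart. Here I use the separation hypothesis: the disjoint disks of radius $c/(1+|w|)$ show that outside $\bigcup D_z$ each factor satisfies $|z-w|\gtrsim|w_{j+1}-w_j|$. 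Comparing the discrete sum with the corresponding integral, which is where $n(r)$ regular up to $o(r^2)$ enters, gives $\log|S_{\mathrm{near}}|\ge -\varepsilon r^2$ once $\delta$ is small. This is the delicate counting estimate; the self-contained appendix argument of \cite{kulikov2025} would be my template. Finally, the zeros of $S$ are simple and equal $\mathcal{Z}$ by construction, since the points are distinct.
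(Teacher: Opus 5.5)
The paper gives no proof of this statement. It quotes Levin \cite[Chapter~2, Theorem~5]{levinentirefunctions} and points to the appendix of \cite{kulikov2025}. Your outline (genus-$2$ canonical product, quadratic exponential correction, indicator from angular density, near/far splitting) is that standard route. However, the step the whole indicator computation rests on is false as stated: a logarithmically growing $\sum_{|w|<r}w^{-2}$ cannot be absorbed into $e^{c_2z^2+c_1z}$. The factors with $|w|<r$ contribute $\frac12\Re\bigl(z^2\sum_{|w|<r}w^{-2}\bigr)$ to $\log|P(z)|$, whereas $\Re(c_2z^2)=O(r^2)$. Precisely: by Hadamard, any $S$ satisfying \eqref{eq:levin-upper} with zero set $\mathcal Z$ is $e^{Q}P$ with $\deg Q\le 2$. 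By Lindel\"of's theorem such a function has finite type only if $\sum_{|w|<r}w^{-2}$ stays bounded, and a fixed indicator with the matching lower bound needs this sum to converge. Real zeros contribute $+|w|^{-2}$ and imaginary ones $-|w|^{-2}$. So if $n(t)\sim\Delta_1t^2$ on each real half-axis and $n(t)\sim\Delta_2t^2$ on each imaginary one, the sum grows like $4(\Delta_1-\Delta_2)\log r$. Even when $\Delta_1=\Delta_2$, asymptotic equivalence alone allows divergence: a discrepancy $n_{\mathbb R}(t)-n_{i\mathbb R}(t)\approx t^2/\log t$ gives $2\log\log r$. The missing ingredient is a balance condition. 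You need equal densities on the two axes and enough regularity for $\int^\infty\bigl(n_{\mathbb R}(t)-n_{i\mathbb R}(t)\bigr)t^{-3}\,dt$ to converge. An example is $|n(r)-\Delta r^2|=O(1)$ on each half-axis, as for the $2$-smooth sets to which the paper applies the theorem.

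Your indicator matching also fails. Since $k$ is even and symmetric about $\pi/2$, it equals $\beta\pi|\sin2\theta|-\gamma\cos2\theta$ on the whole circle. This is the indicator of $e^{-\gamma z^2}\sin(\pi\beta z^2)$. Hence $k'$ jumps by $4\pi\beta$ at \emph{both} $\theta=0$ and $\theta=\pi/2$, not by $4\gamma$ at $\pi/2$, and $\gamma$ comes entirely from the exponential factor, not from zeros. Since the jump is $4\pi$ times the density, $k$ forces density $\beta$ on every half-axis. The densities $\beta/2$ and $\gamma/(2\pi)$ you copied from the definition correspond to jumps $2\pi\beta$ and $2\gamma$. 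Your implied identity $4\beta\pi=4\pi\cdot\frac{\beta}{2}$ is false, so the ``consistency check'' fails. Your construction cannot produce $k$ from those densities, and for $\beta/2\ne\gamma/(2\pi)$ it does not even give finite type, by the previous paragraph. The definition of $k$-regular has to be read as density $\beta$ on each of the four half-axes. This is how the paper actually uses the theorem, with $\beta=D_\Lambda$ and $\Lambda\subset\mathcal Z$. Then $c_2$ is fixed by $\gamma$ and $\lim_{r\to\infty}\sum_{|w|<r}w^{-2}$.

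Two smaller slips. First, your single-ray formula has the wrong sign and an extra factor $\pi$. Modulo sinusoids it is $-\Delta(\theta-\psi)\sin2(\theta-\psi)$ on $(\psi,\psi+2\pi)$, which summed over the four half-axes gives $\pi\Delta|\sin2\theta|$. Second, pairing $w$ with $-w$ is unavailable for non-symmetric $\mathcal Z$ and unnecessary with genus-$2$ factors. With these corrections the rest of your plan is the standard argument. In particular, the near part needs only the disjointness of the disks, which gives $\log|S_{\mathrm{near}}(z)|\ge -C\delta|\log\delta|\,r^2$ off $\bigcup_{w}D_w$.
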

Combining \eqref{eq:levin-lower} with the minimum modulus principle, we have that, for $z\in\mathcal{Z}$,
\begin{equation}\label{eq:levin-derivative-lower}
    |S'(z)|\ge c_\varepsilon e^{(k(\arg z)-\varepsilon)|z|^2}.
\end{equation}

The next lemma explains why a generating function with this indicator has the required Fourier decay.
\begin{lemma}\label{lem:fourier-decay-bound}
    Let $f:\mathbb{C}\to\mathbb{C}$ be an entire function of order $2$ and $a,b>0$. Assume that 
    \[
    h_f(\theta)\leq m\pi|\sin(2\theta)|-a\pi\cos(2\theta)\quad\text{for all }\theta\in[0,2\pi)
    \]
    and $0\leq m<\sqrt{a\left(\frac{1}{b}-a\right)}$. Then $\max\{h_{\hat{f}}(0),h_{\hat{f}}(\pi)\}<-b\pi$.
\end{lemma}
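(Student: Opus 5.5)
The idea is to shift the contour in the Fourier integral into the complex plane, using the indicator bound to control $f$ on a horizontal line, and then optimize the depth of the shift. By symmetry it suffices to treat $\xi\to+\infty$; the case $\xi\to-\infty$ is the same with the line moved to the other half-plane. I will use the standard uniform form of the indicator bound: since $f$ has order $2$ and finite type (the majorant is bounded), for every $\varepsilon>0$ there is $C_\varepsilon$ with $|f(re^{i\theta})|\le C_\varepsilon e^{(h_f(\theta)+\varepsilon)r^2}$, uniformly in $\theta$. In Cartesian coordinates $z=x+iy$ we have $r^2|\sin 2\theta|=2|x||y|$ and $r^2\cos 2\theta=x^2-y^2$. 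The hypothesis therefore gives
\[
|f(x+iy)|\le C_\varepsilon\exp\bigl(\pi(2m|x||y|-a x^2+a y^2)+\varepsilon(x^2+y^2)\bigr).
\]
In particular, since $h_f(0)\le -a\pi<0$, the function $f$ has Gaussian decay on $\mathbb{R}$, so $\hat f$ is well defined.

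Fix $c>0$ and, for $\xi>0$, move the integral $\int_{\mathbb{R}}f(x)e^{-2\pi i x\xi}\,dx$ to the line $\{x-ic\xi : x\in\mathbb{R}\}$. To justify this, apply Cauchy's theorem on the rectangle with vertices $\pm R$ and $\pm R-ic\xi$. On the vertical sides, where $|y|\le c\xi$ and $|x|=R$, the bound above gives
\[
\exp\bigl(\pi(2mRc\xi-aR^2+ac^2\xi^2)+\varepsilon(R^2+c^2\xi^2)\bigr)\cdot e^{2\pi c\xi^2},
\]
which tends to $0$ as $R\to\infty$ provided $\varepsilon<a\pi$. On the shifted line, $|e^{-2\pi i(x-ic\xi)\xi}|=e^{-2\pi c\xi^2}$, so
\[
|\hat f(\xi)|\le C_\varepsilon\int_{\mathbb{R}}\exp\bigl(\pi(2mc\xi|x|-ax^2)+\pi a c^2\xi^2-2\pi c\xi^2+\varepsilon(x^2+c^2\xi^2)\bigr)\,dx .
\]

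The $x$-integral is Gaussian. Completing the square, the maximum of $2mc\xi|x|-ax^2$ equals $m^2c^2\xi^2/a$, attained at $|x|=mc\xi/a$. With the $\varepsilon$-perturbations, the integral is therefore at most $C'_\varepsilon\exp\bigl(\pi\xi^2\,\Phi(c)+\delta(\varepsilon)\xi^2\bigr)$, where
\[
\Phi(c):=c^2\,\frac{m^2+a^2}{a}-2c
\]
and $\delta(\varepsilon)\to0$ as $\varepsilon\to0$. Minimizing over $c$, the optimal choice is $c=a/(m^2+a^2)$, which gives $\Phi=-a/(m^2+a^2)$. Hence
\[
h_{\hat f}(0)\le -\frac{\pi a}{m^2+a^2}+\delta(\varepsilon)
\]
for every $\varepsilon>0$. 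Finally, $\frac{a}{m^2+a^2}>b$ is equivalent to $m^2<\frac ab-a^2=a\left(\frac1b-a\right)$, which is exactly the hypothesis. Taking $\varepsilon$ small then yields a strict inequality $h_{\hat f}(0)<-b\pi$, and likewise $h_{\hat f}(\pi)<-b\pi$.

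The main technical point is the uniformity in $\theta$ of the indicator bound, which is needed on the whole region swept by the contour shift. This is the classical fact that, for functions of finite order and type, $\limsup_{r\to\infty} r^{-2}\log|f(re^{i\theta})|\le h_f(\theta)$ holds uniformly in $\theta$, because the indicator is continuous and $2$-trigonometrically convex. One also has to track the $\varepsilon$-errors, and they are harmless because the inequality being proved is strict. If the lemma is meant to include extending $\hat f$ to an entire function whose indicator makes sense, this follows from the Gaussian decay of $f$ on $\mathbb{R}$, which makes $\hat f$ entire.
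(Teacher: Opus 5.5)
Your proposal is correct and follows essentially the same route as the paper: shift the Fourier integral to a horizontal line at depth proportional to $|\xi|$, control $f$ on that line by the uniform indicator bound, and optimize the depth. The only difference is bookkeeping. The paper first verifies the polar inequality $m\pi|\sin(2\theta)|-a\pi\cos(2\theta)+2\pi\varepsilon\le\frac{\pi}{b+\varepsilon}\sin^2\theta$, so that $|f(x+iy)|$ is dominated by a function of $y$ alone, whereas you complete the square in $x$ directly; both give the same threshold $\frac{a}{m^2+a^2}>b$, and your explicit justification of the contour shift is a welcome addition.
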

\begin{proof}
We choose $\varepsilon\in(0,a/2)$ sufficiently small such that 
\[
m\leq \sqrt{(a-2\varepsilon)\left(\frac{1}{b+\varepsilon}-(a+2\varepsilon)\right)}.
\]
We now want to check the inequality
\begin{equation}\label{eq:fourier-decay-indicator-inequality}
    m\pi|\sin(2\theta)|-a\pi\cos(2\theta)+2\pi\varepsilon\leq\frac{\pi}{b+\varepsilon}\sin^2\theta\quad\text{for all }\theta\in\left[0,2\pi\right).
\end{equation}
Because of symmetry, we can assume without loss of generality that $\theta\in[0,\pi/2)$. Let $u=\tan\theta$; then $\cos(2\theta)=\frac{1-u^2}{1+u^2}$, $\sin 2\theta=\frac{2u}{1+u^2}$, and $\sin^2\theta=\frac{u^2}{1+u^2}$. We can now rewrite the inequality as
\[
m\pi\frac{2u}{1+u^2}-a\pi\frac{1-u^2}{1+u^2}+2\pi\varepsilon\leq\frac{\pi}{b+\varepsilon}\left(\frac{u^2}{1+u^2}\right).
\]
We can simplify this to $2um-a(1-u^2)+2\varepsilon(1+u^2)\leq\frac{u^2}{b+\varepsilon}$.
Rearranging, we see that
\[
2\varepsilon-a\leq u^2\left(\frac{1}{b+\varepsilon}-(a+2\varepsilon)\right)-2um.
\]
We notice that the right-hand side is minimized when $u=m\left(\frac{1}{b+\varepsilon}-(a+2\varepsilon)\right)^{-1}$. This gives us the expression
\[
2\varepsilon-a\leq m^2\left(\frac{1}{b+\varepsilon}-(a+2\varepsilon)\right)^{-1}-2m^2\left(\frac{1}{b+\varepsilon}-(a+2\varepsilon)\right)^{-1}
\]
which is equivalent to $m\leq \sqrt{(a-2\varepsilon)\left(\frac{1}{b+\varepsilon}-(a+2\varepsilon)\right)}$, hence~\eqref{eq:fourier-decay-indicator-inequality} holds.

We can upper bound the Fourier transform of $f$ using a contour shift and Hölder's inequality:
    \begin{align*}
|\hat{f}(\xi)|&=\left|\int_\mathbb{R}f(x+iy)e^{-2\pi i(x+iy)\xi}\,dx\right|\\
    &\leq \left(\int_\mathbb{R}e^{-\pi\varepsilon|x|^2}\,dx\right)\inf_{y\in\mathbb{R}}\sup_{x\in\mathbb{R}}|f(x+iy)|e^{\pi\varepsilon|x|^2-2\pi |\xi||y|}\\
    &\leq C_\varepsilon\inf_{y\in\mathbb{R}}\sup_{x\in\mathbb{R}}e^{(h_f(\theta)+\pi\varepsilon)|x+iy|^2+\pi\varepsilon|x|^2-2\pi|\xi| |y|}.
\end{align*}
Since $h_f(\theta)+2\pi\varepsilon\leq\frac{\pi}{b+\varepsilon}|\sin\theta|^2$, we know that
\[
(h_f(\theta)+\pi\varepsilon)|x+iy|^2+\pi\varepsilon|x|^2\leq \frac{\pi}{b+\varepsilon}|y|^2.
\]
Hence,
\[
C_\varepsilon \inf_{y\in\mathbb{R}}\sup_{x\in\mathbb{R}}e^{(h_f(\theta)+\pi\varepsilon)|x+iy|^2+\pi\varepsilon|x|^2-2\pi|\xi||y|}\leq C_\varepsilon \inf_{y\in\mathbb{R}} e^{\frac{\pi}{b+\varepsilon}|y|^2-2\pi|\xi||y|}.
\]
Completing the square, we see that 
\[
|\hat{f}(\xi)|\leq C_\varepsilon\inf_{y\in\mathbb{R}} e^{\frac{\pi}{b+\varepsilon}|y|^2-2\pi|\xi||y|}\leq C_\varepsilon e^{-(b+\varepsilon)\pi|\xi|^2}.\qedhere
\]
\end{proof}

\subsection{Weighted simultaneous interpolation}
The non-uniqueness proof also requires simultaneous interpolation in time and frequency. We use the following version of Lemma~7.4 in~\cite{kulikov2025}.

Let $\alpha:\Lambda\to\mathbb{C}$ and $\beta:M\to\mathbb{C}$. We write $\kappa=(\alpha,\beta)$.
We say that $\kappa$ is in the weighted $\ell^1$-space $\mathfrak{S}(a,b)$ if the norm
\[
\|\kappa\|_{\mathfrak{S}(a,b)}:=\sum_{\lambda\in\Lambda}|\alpha(\lambda)|e^{a\pi|\lambda|^2}+\sum_{\mu\in M}|\beta(\mu)|e^{b\pi|\mu|^2}
\]
is finite. 
\begin{lemma}\label{lem:simultaneous-weighted-interpolation}
    Let $(\Lambda, M)$ be a pair where $\Lambda_-,\Lambda_+,M_-,M_+$ are $2$-smooth sequences with densities $D_\Lambda=D(\Lambda_-,2)=D(\Lambda_+,2)$ and $D_M=D(M_-,2)=D(M_+,2)$. Furthermore, assume that
\begin{align*}
        D_\Lambda<\sqrt{a\left(\frac{1}{b}-a\right)}\quad\text{and}\quad D_M<\sqrt{b\left(\frac{1}{a}-b\right)}.
\end{align*}
Then there exists $L>0$ such that for every $\kappa\in\mathfrak{S}(a,b)$ there exists $f\in\mathcal{S}$ such that
\[
f(\lambda)=\alpha(\lambda)\quad\text{for all }\lambda\in\Lambda_L
\qquad\text{and}\qquad
\hat{f}(\mu)=\beta(\mu)\quad\text{for all }\mu\in M_L,
\]
where $\Lambda_L:=\Lambda\backslash[-L,L]$ and $M_L:=M\backslash[-L,L]$. Additionally, this $f$ satisfies 
\[
h_f(0),h_f(\pi)\leq -a\pi\quad\text{and}\quad h_{\hat{f}}(0),h_{\hat{f}}(\pi)\leq-b\pi.
\]
\end{lemma}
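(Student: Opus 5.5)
We follow the scheme of Lemma~7.4 in~\cite{kulikov2025}: build the interpolant as a series of elementary ``basis'' functions, each attached to a single node, with sharp Phragm\'en--Lindel\"of bounds, and sum with the given weights. First we construct generating functions using Theorem~\ref{thm:levin-prescribed-zeros}. Choose $m_\Lambda$ with $D_\Lambda<m_\Lambda<\sqrt{a(\frac1b-a)}$, scaled as the angular density with exponent $2$ requires. Take a $k$-regular set $\mathcal Z$ whose real part is $\Lambda_L$, possibly thinned or augmented to reach exactly density $m_\Lambda$ (the $2$-smoothness of $\Lambda_\pm$ gives the separation condition in the definition), and whose imaginary part has density $2\gamma/4\pi$ with $\gamma=a\pi$. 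Then $S_\Lambda$ satisfies
\[
|S_\Lambda(re^{i\theta})|\le C_\varepsilon e^{(m\pi|\sin 2\theta|-a\pi\cos2\theta+\varepsilon)r^2},
\]
and by \eqref{eq:levin-derivative-lower} we have $|S_\Lambda'(\lambda)|\ge c_\varepsilon e^{(a\pi-\varepsilon)\lambda^2}$ for $\lambda\in\Lambda_L$. Lemma~\ref{lem:fourier-decay-bound} then gives $h_{\hat S_\Lambda}(0),h_{\hat S_\Lambda}(\pi)<-b\pi$. The same construction in the frequency variable produces $S_M$ vanishing on $M_L$, with roles of $a,b$ swapped. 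The constant $L$ is chosen so that the finitely many small nodes can be discarded and the extra zeros added do not collide with the nodes.

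Next we form the basis functions. For $\lambda\in\Lambda_L$ we set
\[
g_\lambda(x)=\frac{S_\Lambda(x)}{S_\Lambda'(\lambda)(x-\lambda)},
\]
so that $g_\lambda(\lambda')=\delta_{\lambda\lambda'}$ on $\Lambda_L$. Dividing by $x-\lambda$ does not change the indicator, and uniform bounds in $\lambda$ hold: $|g_\lambda(z)|e^{-(\text{indicator})}\lesssim C_\varepsilon e^{-(a\pi-\varepsilon)\lambda^2}e^{\varepsilon|z|^2}$. We define $\tilde g_\mu$ analogously via $\hat{\tilde g}_\mu=S_M(\xi)/(S_M'(\mu)(\xi-\mu))$. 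The naive candidate
\[
f_0=\sum_\lambda\alpha(\lambda)g_\lambda+\sum_\mu\beta(\mu)\tilde g_\mu
\]
fails because the $\hat g_\lambda$ do not vanish on $M_L$ and the $\tilde g_\mu$ do not vanish on $\Lambda_L$. We correct this as in~\cite{kulikov2025} by defining the operator
\[
T\kappa=\Bigl(\bigl(\textstyle\sum_\mu\beta(\mu)\tilde g_\mu\bigr)|_{\Lambda_L},\ \bigl(\textstyle\sum_\lambda\alpha(\lambda)\hat g_\lambda\bigr)|_{M_L}\Bigr)
\]
and solving $(I+T)\kappa'=\kappa$ by a Neumann series. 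Then $f=\sum\alpha'(\lambda)g_\lambda+\sum\beta'(\mu)\tilde g_\mu$ interpolates $\kappa$ exactly.

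The main obstacle is showing $\|T\|_{\mathfrak S(a,b)\to\mathfrak S(a,b)}<1$ once $L$ is large. This requires estimating the entries $|\hat g_\lambda(\mu)|e^{b\pi\mu^2}e^{-a\pi\lambda^2}$, summed over $\mu\in M_L$ and taken as a supremum over $\lambda$. We would redo the contour shift of Lemma~\ref{lem:fourier-decay-bound} with a parameter-dependent bound, using the strict inequality with margin $\varepsilon$ to obtain
\[
|\hat g_\lambda(\xi)|\le C_\varepsilon e^{-(a\pi-\varepsilon)\lambda^2}e^{-(b+\delta)\pi\xi^2}
\]
for some $\delta>0$. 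The loss $\varepsilon\lambda^2$ must be absorbed, which we plan to do using the $\delta$ gap in $\xi$ together with a gain in $\lambda$. The latter is exactly what the strict density inequality buys, by choosing the imaginary-axis density slightly above $a\pi$. Summing over $\mu\in M_L$ and $|\lambda|>L$ then yields a small constant.

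Finally, the resulting $f$ is a convergent sum of entire functions of order $2$ with uniform indicator bounds. This gives $h_f(0),h_f(\pi)\le-a\pi$, and the corresponding statement for $\hat f$ follows the same way. In particular, $f$ and $\hat f$ have Gaussian decay, so $f\in\mathcal S$.
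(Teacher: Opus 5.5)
\textbf{Gap: the size of the normalizing factor $1/S_\Lambda'(\lambda)$.} Your architecture is the paper's: generating functions from Theorem~\ref{thm:levin-prescribed-zeros}, Lagrange-type basis functions $S_\Lambda(z)/(S_\Lambda'(\lambda)(z-\lambda))$ with frequency-side analogues, and a correction scheme. The paper's recursion $\alpha_{j+1}=\alpha_j-f_j|_{\Lambda_L}$, $\beta_{j+1}=\beta_j-\hat f_j|_{M_L}$, $F_n=\sum_{j\le n}f_j$ is exactly your Neumann series for $(I+T)^{-1}$. The gap is in the one estimate that carries the proof.

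\begin{itemize}
\item Since $k(0)=-a\pi$, \eqref{eq:levin-derivative-lower} gives $|S_\Lambda'(\lambda)|\ge c_\varepsilon e^{-(a\pi+\varepsilon)\lambda^2}$, not $c_\varepsilon e^{(a\pi-\varepsilon)\lambda^2}$. $S_\Lambda$ decays on $\mathbb{R}$, so $g_\lambda$ carries a normalizing factor of size $e^{(a\pi+\varepsilon)\lambda^2}$. It has to, since $g_\lambda(\lambda)=1$ while $g_\lambda$ decays like $e^{-a\pi x^2}$. All your later bounds inherit this sign error.
\item With the correct sign, the column sums of $T$ are only bounded by $\sum_{\mu}|\hat g_\lambda(\mu)|e^{b\pi\mu^2}e^{-a\pi\lambda^2}\lesssim e^{\varepsilon\lambda^2}\sum_{\mu\in M_L}e^{-\delta\pi\mu^2}$. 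Bounding $\|T\|$ needs the supremum over $\lambda$, and the $\delta$-gap sits in the wrong variable to absorb $e^{\varepsilon\lambda^2}$.
\item Your remedy goes the wrong way. Raising the imaginary-axis parameter $\gamma=-k(0)$ above $a\pi$ makes $|S_\Lambda'(\lambda)|\approx e^{-\gamma\lambda^2}$ smaller and the loss larger.
\item Lowering $\gamma$ slightly below $a\pi$ does make $T$ a contraction on $\mathfrak{S}(a,b)$. But then $h_{g_\lambda}(0)=-\gamma$ already for each fixed $\lambda$, so your final step only yields $h_f(0)\le-\gamma$, weaker than the claimed $-a\pi$.
\item Your last paragraph fails for the same reason: with $\gamma=a\pi$, the sum $\sum_\lambda|\alpha'(\lambda)|e^{(a\pi+\varepsilon)\lambda^2}$ need not converge for $\kappa'\in\mathfrak{S}(a,b)$.
\end{itemize}

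\textbf{Comparison with the paper and possible repairs.} At this point the paper's proof writes $|\Phi'(\lambda)|\ge\tilde C_\varepsilon e^{-a\pi|\lambda|^2}$, a normalization with no $e^{\varepsilon\lambda^2}$ loss. From there $|\Phi_\lambda(x)|\le C_1e^{a\pi\lambda^2-a\pi x^2}$ and the contraction is a one-line estimate. You are right to worry about an $\varepsilon$-loss: \eqref{eq:levin-derivative-lower} by itself does not give the paper's bound. There are two ways forward.
\begin{itemize}
\item To prove the lemma as stated, you need generating-function estimates with at most polynomial loss. That is what the $O(1)$ error term in the definition of $2$-smoothness is designed to supply, as in~\cite{kulikov2025}. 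A residual polynomial loss in $\lambda$ can be offset by adjoining $N$ extra zeros off $\Lambda$ to $S_\Lambda$. This divides $1/|S_\Lambda'(\lambda)|$ by about $|\lambda|^N$, and the accompanying factor $|z|^N$ is harmless in the contour shift of Lemma~\ref{lem:fourier-decay-bound}.
\item If you stay with the $\varepsilon$-lossy Levin estimate, take $\gamma$ slightly below $a\pi$ (and likewise on the $M$ side). You then get the weaker conclusion $h_f(0),h_f(\pi)\le-(a-\eta)\pi$ and $h_{\hat f}(0),h_{\hat f}(\pi)\le-(b-\eta)\pi$. This suffices for Theorem~\ref{thm:uniqueness-gelfand-shilov}(ii), where the data are finitely supported and $(a,b)$ may be enlarged slightly, but it is not the lemma as stated.
\end{itemize}
A minor point: you may augment $\Lambda_L$ but not thin it, since every node must be a zero of $S_\Lambda$.
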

\begin{proof}
We define the function
\[
k(\theta):=D_\Lambda\pi\sin(2|\theta|)-a\pi\cos(2\theta),\quad\theta\in\left[-\frac{\pi}{2},\frac{\pi}{2}\right].
\]
We then extend $k$ to be even, symmetric around $\frac{\pi}{2}$, and $2\pi$-periodic. We choose a $k$-regular zero set whose real zeros contain $\Lambda$, and let $\Phi(z)$ be the function constructed from Theorem~\ref{thm:levin-prescribed-zeros} with indicator $k$ such that $\Phi|_\Lambda=0$. 

Now let
\[
\Phi_\lambda(z):=\frac{\Phi(z)}{\Phi'(\lambda)(z-\lambda)}
\]
This function satisfies 
\[
\Phi_\lambda(\lambda')=\begin{cases}
    1,\quad\lambda=\lambda'\\
    0,\quad\lambda\neq\lambda'.
\end{cases}
\]
By Lemma~\ref{lem:fourier-decay-bound}, there exists $\varepsilon>0$ such that $h_{\hat{\Phi}_\lambda}(0),h_{\hat{\Phi}_\lambda}(\pi)\leq-(b+\varepsilon)\pi$. We also know by the minimum modulus principle~\eqref{eq:levin-derivative-lower} that $|\Phi'(\lambda)|\geq \tilde{C}_\varepsilon e^{-a\pi|\lambda|^2}$ for some $\tilde{C}_\varepsilon>0$, hence
\[
|\Phi_\lambda(x)|\leq C_1(\varepsilon)e^{a\pi|\lambda|^2-a\pi |x|^2}\quad\text{and}\quad|\hat{\Phi}_\lambda(\xi)|\leq C_1(\varepsilon)e^{a\pi|\lambda|^2-(b+\varepsilon)\pi |\xi|^2}
\]
where $C_1(\varepsilon):=\frac{C_\varepsilon}{\tilde{C}_\varepsilon}$. Similarly, we can also construct a function $\Psi(z)$ such that 
\[
|\Psi_\mu(x)|\leq C_2(\varepsilon)e^{b\pi|\mu|^2-(a+\varepsilon)\pi |x|^2}\quad\text{and}\quad|\hat{\Psi}_\mu(\xi)|\leq C_2(\varepsilon)e^{b\pi|\mu|^2-b\pi |\xi|^2}
\]
and
\[
\hat{\Psi}_\mu(\mu')=\begin{cases}
    1,\quad\mu=\mu'\\
    0,\quad\mu\neq\mu'.
\end{cases}
\]
We now choose $L>0$ such that 
\[
\sum_{\lambda\in \Lambda_L} e^{-\varepsilon\pi|\lambda|^2}+\sum_{\mu\in M_L}e^{-\varepsilon\pi|\mu|^2}\leq\frac{1}{4C(\varepsilon)}.
\]
Here $C(\varepsilon):=\max\{C_1(\varepsilon),C_2(\varepsilon)\}$. We define the function
\[
f_1(x):=\sum_{\lambda\in\Lambda_L}\alpha_1(\lambda)\Phi_\lambda(x)+\sum_{\mu\in M_L}\beta_1(\mu)\Psi_\mu(x)
\]
where $\alpha_1(\lambda)=\alpha(\lambda)$ and $\beta_1(\mu)=\beta(\mu)$. We define recursively
\[
f_{j+1}(x):=\sum_{\lambda\in\Lambda_L}\alpha_{j+1}(\lambda)\Phi_\lambda(x)+\sum_{\mu\in M_L}\beta_{j+1}(\mu)\Psi_\mu(x)
\]
with $\alpha_{j+1}(\lambda):=\alpha_j(\lambda)-f_j(\lambda)$ and $\beta_{j+1}(\mu):=\beta_j(\mu)-\hat{f}_j(\mu)$.

Now let $F_n(x):=\sum_{j=1}^nf_j(x)$. We see that
$|F_n(\lambda)-\alpha(\lambda)|=|\alpha_{n+1}(\lambda)|$. We now have 
\begin{align*}
    \sum_{\lambda\in\Lambda_L}|\alpha_{n+1}(\lambda)|e^{a\pi|\lambda|^2}&=\sum_{\lambda\in\Lambda_L}\left|\sum_{\mu\in M_L}\beta_n(\mu)\Psi_\mu(\lambda)\right|e^{a\pi|\lambda|^2}\\
    &\leq \left(C(\varepsilon)\sum_{\lambda\in\Lambda_L}e^{-\varepsilon\pi|\lambda|^2}\right)\left(\sum_{\mu\in M_L}|\beta_n(\mu)|e^{b\pi|\mu|^2}\right).
\end{align*}
Similarly,
\begin{align*}
    \sum_{\mu\in M_L}|\beta_{n+1}(\mu)|e^{b\pi|\mu|^2}
    &\leq \left(C(\varepsilon)\sum_{\mu\in M_L}e^{-\varepsilon\pi|\mu|^2}\right)\left(\sum_{\lambda\in\Lambda_L}|\alpha_n(\lambda)|e^{a\pi|\lambda|^2}\right).
\end{align*}
Hence, we get the inequality 
\[
\|\kappa_{n+1}\|_{\mathfrak{S}(a,b)}=\sum_{\lambda\in\Lambda_L}|\alpha_{n+1}(\lambda)|e^{a\pi|\lambda|^2}+\sum_{\mu\in M_L}|\beta_{n+1}(\mu)|e^{b\pi|\mu|^2}\leq\frac{1}{2}\|\kappa_n\|_{\mathfrak{S}(a,b)}.
\]
Consequently, $F_n$ converges to the solution of the interpolation problem.
\end{proof}
\subsection{Indicator lower bounds from zeros}
We say that a function $h$ is $p$-trigonometrically convex in a sector $\theta_1<\theta<\theta_2$ with $0<\theta_2-\theta_1<\frac{\pi}{p}$ if 
\[
h(\theta)\leq\frac{\sin(p(\theta_2-\theta))}{\sin(p(\theta_2-\theta_1))}h(\theta_1)+\frac{\sin(p(\theta-\theta_1))}{\sin(p(\theta_2-\theta_1))}h(\theta_2).
\]
Crucially, we can estimate the size of the indicator function $h_f$ with the following theorem.
\begin{theoremx}[Theorem~1, Lecture 8 in~\cite{levinentirefunctions}]\label{thm:levin-indicator}
Let $f(z)$ be a holomorphic function inside an angle, and suppose that it satisfies the inequality $|f(re^{i\theta})|=O(e^{cr^p})$ for some $c>0$ in a sector $\theta_1<\theta<\theta_2$ with $0<\theta_2-\theta_1<\frac{\pi}{p}$. Then its indicator function $h_f$ with respect to the order $p$ is a $p$-trigonometrically convex function.
\end{theoremx}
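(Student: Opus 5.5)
We prove the inequality directly from the Phragmén--Lindelöf principle in a sector, comparing $f$ with an exponential of a trigonometric function of order $p$. Fix $\theta_1<\theta_2$ with $0<\theta_2-\theta_1<\frac{\pi}{p}$, and assume the closed sector $\overline{S}=\{\theta_1\le\arg z\le\theta_2\}$ lies in the angle where $f$ is holomorphic and satisfies $|f(re^{i\theta})|=O(e^{cr^p})$. If necessary, we first shrink the original angle slightly so that $f$ is continuous up to the boundary rays.

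Take $\varepsilon>0$ and real numbers $h_j>h_f(\theta_j)$ for $j=1,2$. If $h_f(\theta_j)$ is finite, set $h_j=h_f(\theta_j)+\varepsilon$. If $h_f(\theta_j)=-\infty$, take $h_j$ to be an arbitrary real number, which we later send to $-\infty$. Since $\sin(p(\theta_2-\theta_1))\neq 0$, there is a unique function $H(\theta)=A\cos(p\theta)+B\sin(p\theta)$ with $H(\theta_1)=h_1$ and $H(\theta_2)=h_2$. Explicitly,
\[
H(\theta)=\frac{\sin(p(\theta_2-\theta))}{\sin(p(\theta_2-\theta_1))}h_1+\frac{\sin(p(\theta-\theta_1))}{\sin(p(\theta_2-\theta_1))}h_2 .
\]
Fix a branch of $z^p$ on $\overline{S}$, for instance $z^p=r^pe^{ip\theta}$ with $\theta\in[\theta_1,\theta_2]$. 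Then
\[
\bigl|e^{(A-iB)z^p}\bigr|=e^{r^p(A\cos p\theta+B\sin p\theta)}=e^{H(\theta)r^p}.
\]
We consider the auxiliary function
\[
F(z):=f(z)e^{-(A-iB)z^p},
\]
which is holomorphic in the interior of $S$ and continuous on $\overline{S}$.

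\textbf{Boundary bound.} By the definition of the indicator as a $\limsup$, we have $|f(re^{i\theta_j})|\le e^{h_jr^p}$ for all $r\ge r_0$. On the compact part $r\le r_0$, $F$ is bounded by continuity. Hence $F$ is bounded on $\partial S$.

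\textbf{Interior growth.} Inside $S$ we have $|F(z)|=O(e^{(c+|A|+|B|)r^p})$, which is growth of order $p$. The opening $\theta_2-\theta_1$ is strictly less than $\pi/p$, so the Phragmén--Lindelöf principle for a sector of opening $\alpha$ applies; it permits any growth of order strictly less than $\pi/\alpha$, and here $\pi/\alpha>p$. It follows that $F$ is bounded in all of $\overline{S}$.

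\textbf{Conclusion.} Boundedness of $F$ gives $|f(re^{i\theta})|\le Ce^{H(\theta)r^p}$ for every $\theta\in(\theta_1,\theta_2)$, hence $h_f(\theta)\le H(\theta)$. Letting $\varepsilon\to0$, and letting $h_j\to-\infty$ in the degenerate cases, yields the $p$-trigonometric convexity inequality. The coefficients in front of $h_1$ and $h_2$ are nonnegative on $[\theta_1,\theta_2]$ precisely because $\theta_2-\theta_1<\pi/p$, so the limit passes through the inequality.

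\textbf{Main obstacle.} The delicate step is the correct use of Phragmén--Lindelöf. The a priori growth of $F$ must be of order strictly below $\pi/(\theta_2-\theta_1)$, and $F$ must be controlled up to the boundary rays. This is why we work in a closed subsector of the angle of holomorphy and use the strict inequality $\theta_2-\theta_1<\pi/p$. The remaining steps, namely the choice of branch of $z^p$, the bound for small $r$, and the handling of $h=-\infty$, are routine.
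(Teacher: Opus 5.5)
Your argument is correct and is the standard Phragm\'en--Lindel\"of proof from Levin, which the paper cites without reproducing: interpolate $h_f(\theta_1)+\varepsilon$ and $h_f(\theta_2)+\varepsilon$ by $A\cos(p\theta)+B\sin(p\theta)$, divide $f$ by $e^{(A-iB)z^p}$, and apply Phragm\'en--Lindel\"of in a sector of opening less than $\pi/p$. One detail to tighten is the vertex: if $f$ is holomorphic only in the open angle, it need not be continuous at $0$, so apply Phragm\'en--Lindel\"of in the truncated sector $\{\theta_1<\arg z<\theta_2,\ |z|>1\}$ instead.
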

Note that because of the trigonometric convexity, the Phragm\'en--Lindel\"of indicator of a non-zero function can never equal $-\infty$. Indeed, if $f(z)\not\equiv0$ is entire of finite order $p$, then its Phragm\'en--Lindel\"of indicator is a finite $p$-trigonometrically convex function. If $h_f(\theta_0)=-\infty$ for some $\theta_0$ and $h_f(\theta)$ is trigonometrically convex on an interval with $\theta_0$ as an endpoint, then this would force $h_f(\theta)=-\infty$ on a neighborhood of $\theta_0$, and then by iteration we could extend this in every direction. This is impossible for a non-zero entire function due to Liouville's theorem.

We will also need the following estimate of the real line decay of a function based on its zero density on the real line.
\begin{lemmax}[Lemma 3 in~\cite{lysen2026discretehardyuncertaintyprinciple}]\label{lem:jensen-counting}
    Suppose that $0<\varphi<\frac{\pi}{2}$ and that $F$ is analytic in $S(\varphi):=\{z\in\mathbb{C}:|\arg(z)|<\varphi\}$, continuous up to the boundary, and satisfies $|F(z)|\leq C_F e^{a\pi|\Im(z)|}$ with some $a>0$. Let $\Gamma=(\gamma_j)_{j\in\mathbb{Z}}\subset\mathbb{R}_+$ be a discrete set such that, for some $\delta>0$ and $C_\Gamma>0$,
    \[
    |\Gamma\cap[u,v]|\geq a(1+\delta)(v-u)-C_\Gamma,\quad 0\leq u<v.
    \]
    If $F|_\Gamma=0$, then 
    \[
    \limsup_{x\to\infty}\frac{\log|F(x)|}{x}\leq -2a\delta\sin\varphi<0.
    \]
\end{lemmax}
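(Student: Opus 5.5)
The plan is to apply Jensen's formula on a disk centred at a large real point $x$ and inscribed in the sector. The zeros of $F$ on $\Gamma$ near $x$ force $\log|F(x)|$ below the boundary average of $\log|F|$ by an amount proportional to the radius. The growth bound $|F(z)|\le C_Fe^{a\pi|\Im z|}$ makes that boundary average only $2a$ times the radius. The density surplus $\delta$ then leaves a net decay of $2a\delta$ times the radius.

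\emph{Choice of disk.} Fix a large $x>0$ with $F(x)\neq 0$; if $F(x)=0$ there is nothing to prove at that point. Let $r:=x\sin\varphi$. The closed disk $\overline{B(x,r)}$ lies in $\overline{S(\varphi)}$, since its boundary circle is tangent to the two rays $\arg z=\pm\varphi$. By the continuity of $F$ up to the boundary, Jensen's formula applies:
\[
\log|F(x)|=\frac{1}{2\pi}\int_0^{2\pi}\log|F(x+re^{it})|\,dt-\int_0^r\frac{n_x(t)}{t}\,dt,
\]
where $n_x(t)$ is the number of zeros of $F$ in $B(x,t)$, counted with multiplicity. (If $F\equiv 0$ the statement is trivial.)

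\emph{Boundary term.} By the growth hypothesis,
\[
\frac{1}{2\pi}\int_0^{2\pi}\log|F(x+re^{it})|\,dt\le\log C_F+\frac{a\pi r}{2\pi}\int_0^{2\pi}|\sin t|\,dt=\log C_F+2ar .
\]

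\emph{Zero term.} Since $t\le r<x$, the interval $[x-t,x+t]$ lies in $[0,\infty)$. The counting hypothesis then gives
\[
n_x(t)\ge|\Gamma\cap[x-t,x+t]|\ge 2a(1+\delta)t-C_\Gamma .
\]
Split the integral at $t=1$ and use $n_x\ge0$ on $(0,1)$. This yields
\[
\int_0^r\frac{n_x(t)}{t}\,dt\ge 2a(1+\delta)(r-1)-C_\Gamma\log r .
\]

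\emph{Conclusion.} Combining the two estimates,
\[
\log|F(x)|\le\log C_F+2a(1+\delta)-2a\delta r+C_\Gamma\log r=-2a\delta\,x\sin\varphi+O(\log x).
\]
Dividing by $x$ and letting $x\to\infty$ gives $\limsup_{x\to\infty}\frac{\log|F(x)|}{x}\le-2a\delta\sin\varphi$, which is the claim.

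The only delicate point is the geometric step: checking that the disk of radius $x\sin\varphi$ stays inside the closed sector, and that the counting interval stays in $[0,\infty)$, which is exactly where the constraint on $\varphi$ and the positivity of $\Gamma$ enter. The logarithmic losses from small $t$ and from the additive constant $C_\Gamma$ are harmless, because they are $o(x)$.
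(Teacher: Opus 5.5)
Your argument is correct and complete: Jensen's formula on the disk $B(x,x\sin\varphi)$ bounds the boundary average by $\log C_F+2ar$ and the zero count by $n_x(t)\ge 2a(1+\delta)t-C_\Gamma$, which yields exactly the constant $2a\delta\sin\varphi$. The paper itself only quotes this lemma from the cited source, but the lemma's label and the sharpness of that constant indicate the source proves it this same way. Two cosmetic points: $B(x,t)$ meets $\mathbb{R}$ in the open interval $(x-t,x+t)$, which changes $n_x(t)$ only on a null set of $t$; and you can apply Jensen on radii $\rho<r$ and let $\rho\to r$, which removes any boundary-regularity concern.
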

Combining the two previous lemmas gives a lower bound on the Phragm\'en--Lindel\"of indicator of a function with a $p$-smooth zero set on the real line.
\begin{lemma}\label{lem:indicator-lower-bound-from-zeros}
        Let $\Gamma\subset\mathbb{R}_+$ be a $p$-smooth sequence with density $D(\Gamma,p)$ where $p\geq 1$. If $f$ is an entire function of order $p$ and $f|_\Gamma=0$, then
        \[
D(\Gamma,p)\pi\sin(p\theta)+h_f(0)\cos(p\theta)\leq \max\{h_f(\theta),h_f(-\theta)\}, \quad\text{for all }\theta\in\left[0,\pi/p\right].
\]
\end{lemma}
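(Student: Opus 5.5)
The plan is to argue by contradiction. We transfer the problem to a function of exponential type on a sector, using the substitution $w=z^p$ together with a Gaussian-type correction factor, so that Lemma~\ref{lem:jensen-counting} applies. That lemma then yields extra decay of $f$ along the positive real axis, which contradicts the definition of $h_f(0)$. We may assume $h_f(0)>-\infty$, since otherwise $f\equiv 0$ by the remark following Theorem~\ref{thm:levin-indicator}. Write $D=D(\Gamma,p)$.

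\textbf{Setup of the contradiction.} Fix $\theta_0\in(0,\pi/p)$ and suppose
\[
H:=\max\{h_f(\theta_0),h_f(-\theta_0)\}<D\pi\sin(p\theta_0)+h_f(0)\cos(p\theta_0).
\]
We apply Theorem~\ref{thm:levin-indicator} separately on the sectors $[0,\theta_0]$ and $[-\theta_0,0]$, each of opening $<\pi/p$. This gives, for $|\theta|\le\theta_0$,
\[
h_f(\theta)\le s(\theta),
\]
where $s$ is the sinusoid $A\cos(p\theta)+B\sin(p|\theta|)$ determined by $s(0)=h_f(0)$ and $s(\pm\theta_0)=H$. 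Because $H$ is strictly below the value at $\theta_0$ of the sinusoid $h_f(0)\cos(p\theta)+D\pi\sin(p\theta)$, we get $B<D\pi$. Hence there is $\eta>0$ with
\[
h_f(\theta)\le h_f(0)\cos(p\theta)+(D\pi-\eta)\sin(p|\theta|)\quad\text{for } |\theta|\le\theta_0.
\]
The endpoint $\theta_0=\pi/p$ is excluded here. I expect to recover it afterwards by letting $\theta_0\to\pi/p$ and using continuity of the indicator, since it is trigonometrically convex and hence continuous.

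\textbf{Transfer to exponential type.} Choose $\varphi<\min\{\pi/2,p\theta_0\}$ and a small $\varepsilon>0$. Define, on the sector $S(\varphi)$,
\[
F(w):=f(w^{1/p})\,e^{-(h_f(0)+\varepsilon)w},
\]
using the principal branch of $w^{1/p}$. Since $|e^{-cw}|=e^{-c r\cos\psi}$ for $w=re^{i\psi}$, the bound on $h_f$ in the variable $\psi=p\theta$ gives
\[
|F(w)|\le C\,e^{(D\pi-\eta+\varepsilon)|\Im w|}.
\]
To make this uniform with a constant, as Lemma~\ref{lem:jensen-counting} requires, I intend to use the standard fact that the indicator bound holds uniformly on closed subsectors up to an arbitrary $\varepsilon$-loss. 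Slightly enlarging $\varepsilon$ (and correspondingly the exponent) absorbs these errors.

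We apply Lemma~\ref{lem:jensen-counting} with $a=D-(\eta-\varepsilon)/\pi$ and zero set $\Gamma^p=\{\gamma_j^p\}$. Condition (1) of $p$-smoothness gives
\[
|\Gamma^p\cap[u,v]|=D(v-u)+O(1),
\]
and for $\varepsilon$ small we have $D\ge a(1+\delta)$ for some $\delta>0$. The lemma therefore gives
\[
\limsup_{x\to\infty}\frac{\log|F(x)|}{x}\le-2a\delta\sin\varphi.
\]
Translating back to $f$ on the positive real axis, this says
\[
h_f(0)\le h_f(0)+\varepsilon-2a\delta\sin\varphi.
\]
This is a contradiction once $\varepsilon<2a\delta\sin\varphi$. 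Such a choice is possible because $\delta$, $a$ and $\varphi$ stay bounded away from zero as $\varepsilon\to0$, since they depend only on $\eta$ and $\theta_0$.

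\textbf{Main obstacle.} The delicate step is the uniform sector estimate feeding into Lemma~\ref{lem:jensen-counting}. The indicator only controls growth ray by ray, and only up to $\varepsilon r^p$. I would handle this by Phragm\'en--Lindel\"of in the $w$-plane: $F$ has order $1$ and its indicator is dominated by $(D\pi-\eta+\varepsilon)|\sin\psi|$, and the standard uniform version of the indicator bound on a closed subsector yields the required constant. A second, lesser issue is the case $p\theta_0\ge\pi/2$. This is why I use the sinusoid on the full $[0,\theta_0]$ only to extract the slack $\eta$, and then restrict to a small sector $\varphi<\pi/2$.
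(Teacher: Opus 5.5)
Your proof is correct and is essentially the paper's argument: contradiction, the substitution $w=z^p$ with the factor $e^{-(h_f(0)+\varepsilon)w}$, and Lemma~\ref{lem:jensen-counting} on a sector of opening less than $\pi/2$. Your sign of $\varepsilon$ is the one needed for the uniform sector bound; the paper's $e^{-(h_f(0)-\varepsilon)z}$ appears to be a typo. The only difference is organizational. You apply trigonometric convexity on $[-\theta_0,\theta_0]$ first to push the slack into a narrow sector, which handles every $\theta_0\in(0,\pi/p)$ at once. The paper instead argues directly for $\theta_0<\pi/(2p)$ and then extends past $\pi/(2p)$ by trigonometric convexity.
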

\begin{proof}
    Assume that there exist $\delta>0$ and $0<\theta_0<\frac{\pi}{2p}$ such that
\begin{equation}\label{eq:indicator-lower-bound-contradiction}
    \frac{D(\Gamma,p)}{1+\delta}\pi\sin(p\theta_0)+h_f(0)\cos(p\theta_0)> \max\{h_f(\theta_0),h_f(-\theta_0)\}.
\end{equation}

Now define $F_\varepsilon(z)=e^{-(h_f(0)-\varepsilon)z}f(z^\frac{1}{p})$ in the right half-plane. By the Phragm\'en--Lindel\"of principle, assuming that $\varepsilon>0$ is sufficiently small, we have a bound $|F_\varepsilon(z)|=O(e^{a\pi|\Im z|})$ with $a=\frac{D(\Gamma,p)}{1+\delta}<D(\Gamma,p)$. 

By Lemma~\ref{lem:jensen-counting}, we know that
\[
\limsup_{x\to\infty}\frac{\log|F_\varepsilon(x)|}{x}\leq -2a\delta\sin(p\theta_0)<0
\]
independently of $\varepsilon>0$. Choosing $\varepsilon\in(0,2a\delta\sin(p\theta_0))$, we get a contradiction. This means that~\eqref{eq:indicator-lower-bound-contradiction} holds for no $\delta>0$ and $0<\theta_0<\frac{\pi}{2p}$, and consequently
\begin{equation}\label{eq:indicator-lower-bound}
D(\Gamma,p)\pi\sin(p\theta)+h_f(0)\cos(p\theta)\leq \max\{h_f(\theta),h_f(-\theta)\}, \quad\text{for all }\theta\in\left(0,\frac{\pi}{2p}\right).
\end{equation}
Clearly, this means that $\max\{h_f(\frac{\pi}{2p}),h_f(-\frac{\pi}{2p})\}\geq D(\Gamma,p)\pi$. Fix $\frac{\pi}{2p}<\theta<\frac{\pi}{p}$. If $h_f(\frac{\pi}{2p})\geq D(\Gamma,p)\pi$, then Theorem~\ref{thm:levin-indicator} applied in the sector $0<\arg z<\theta$ gives
\begin{align*}
    D(\Gamma,p)\pi\leq h_f\left(\frac{\pi}{2p}\right)&\leq\frac{\sin(p(\theta-\frac{\pi}{2p}))}{\sin(p(\theta-0))}h_f(0)+\frac{\sin(p(\frac{\pi}{2p}-0))}{\sin(p(\theta-0))}h_f(\theta)\\
    &=-\frac{\cos(p\theta)}{\sin(p\theta)}h_f(0)+\frac{1}{\sin(p\theta)}h_f(\theta).
\end{align*}
Rearranging, we get
\[
D(\Gamma,p)\pi\sin(p\theta)+h_f(0)\cos(p\theta)\leq h_f(\theta).
\]
If instead $h_f(-\frac{\pi}{2p})\geq D(\Gamma,p)\pi$, the same argument in the sector $-\theta<\arg z<0$ gives
\[
D(\Gamma,p)\pi\sin(p\theta)+h_f(0)\cos(p\theta)\leq h_f(-\theta).
\]
Combining this with~\eqref{eq:indicator-lower-bound} and the continuity of the indicator function, we get 
        \[
D(\Gamma,p)\pi\sin(p\theta)+h_f(0)\cos(p\theta)\leq \max\{h_f(\theta),h_f(-\theta)\}, \quad\text{for all }\theta\in\left[0,\pi/p\right].\qedhere
\]
\end{proof}
\section{Proof of Theorem~\ref{thm:uniqueness-gelfand-shilov}}
\begin{proof}[Proof of Theorem~\ref{thm:uniqueness-gelfand-shilov}]
We begin by proving part~(i) of the theorem. We first claim that, for $\tilde{a}\geq a>0$, $\tilde{b}\geq b>0$, and $\tilde{a}\tilde{b}<1$, the inequalities
\begin{equation}\label{eq:density-parameter-monotonicity}
    \sqrt{\tilde{a}\left(\frac{1}{\tilde{b}}-\tilde{a}\right)}\geq\sqrt{a\left(\frac{1}{b}-a\right)}\quad\text{and}\quad\sqrt{\tilde{b}\left(\frac{1}{\tilde{a}}-\tilde{b}\right)}\geq\sqrt{b\left(\frac{1}{a}-b\right)}
\end{equation}
can both hold only when $\tilde{a}=a$ and $\tilde{b}=b$. Indeed, assume that~\eqref{eq:density-parameter-monotonicity} holds. Then
\[
(1-\tilde{a}\tilde{b})^2=\tilde{a}\left(\frac{1}{\tilde{b}}-\tilde{a}\right)\tilde{b}\left(\frac{1}{\tilde{a}}-\tilde{b}\right)\geq a\left(\frac{1}{b}-a\right)b\left(\frac{1}{a}-b\right)=(1-ab)^2.
\]
This is only possible if $\tilde{a}\tilde{b}=ab$, but $\tilde{a}\geq a$ and $\tilde{b}\geq b$ so $\tilde{a}=a$ and $\tilde{b}=b$. 

This means that it is sufficient to prove the uniqueness result when
\[
\max\{h_f(0),h_f(\pi)\}=-a\pi\quad\text{and}\quad\max\{h_{\hat{f}}(0),h_{\hat{f}}(\pi)\}=-b\pi.
\]

Assume that $f|_\Lambda=0$, $\hat{f}|_M=0$, and $f\not\equiv 0$. We also assume that $h_f(0)\geq h_f(\pi)$. If not, we consider the function $g(x)=f(-x)$. Applying Lemma~\ref{lem:indicator-lower-bound-from-zeros} and recalling that $\max\{h_f(\theta),h_f(-\theta)\}\leq\frac{\pi}{b}\sin^2(\theta)$, we get
\[
D_\Lambda\pi \sin(2\theta)+h_{f}(0)\cos(2\theta)\leq \frac{\pi}{b}\sin^2\theta, \quad\text{for all }0\leq\theta\leq\frac{\pi}{2}.
\]
Since $D_\Lambda>\sqrt{a(\frac{1}{b}-a)}$, we know that
\[
\sqrt{a\left(\frac{1}{b}-a\right)}\sin(2\theta)-\frac{\sin^2\theta}{b}< a\cos(2\theta),\quad \text{for all }\theta\in\left(0,\frac{\pi}{2}\right).
\]
Choosing $\theta=\arctan\frac{\sqrt{a}}{\sqrt{\frac{1}{b}-a}}$, we get the contradiction $a<a$. This proves part~(i) of the theorem. 

We will now prove part~(ii) of the theorem. We can add an infinite $2$-smooth set $V=\{\lambda'_k\}_{k\in\mathbb N}$, disjoint from $\Lambda$, with sufficiently small density so that $\widetilde{\Lambda}:=\Lambda\cup V$ and $M$ still satisfy the density conditions of Lemma~\ref{lem:simultaneous-weighted-interpolation}.

For each $j\in\mathbb N$, let $\kappa_j=(\alpha_j,\beta_j)$ on $(\widetilde{\Lambda},M)$ be defined by
\[
\alpha_j(\lambda)=0\quad(\lambda\in\Lambda),\qquad
\alpha_j(\lambda'_k)=\delta_{j,k}\quad(k\in\mathbb N),\qquad
\beta_j(\mu)=0\quad(\mu\in M).
\]
Applying Lemma~\ref{lem:simultaneous-weighted-interpolation}, we obtain functions $\{f_j\}_{j\in\mathbb{N}}$ such that $f_j|_{\Lambda\backslash[-L,L]}=0$, $\hat{f}_j|_{M\backslash[-L,L]}=0$, and $f_j(\lambda'_k)=\delta_{j,k}$ whenever $\lambda'_k\notin[-L,L]$.

Since $\Lambda\cap[-L,L]$ and $M\cap[-L,L]$ only contain a finite number of points, we know by standard linear algebra that there exists a non-zero linear combination of these functions, denoted by $f$, such that $f|_\Lambda=0$ and $\hat{f}|_M=0$.
\end{proof}
\section{Proofs of the Pauli results}
\subsection{Proof of the one-sided modulus theorem}

In the proofs below we regularize the sampling sets: under a strict $\limsup$ spacing hypothesis one may thin each half-line to a $2$-smooth subset with density still above the corresponding threshold, while under a strict $\liminf$ spacing hypothesis one may augment each half-line to a $2$-smooth superset with density still below the corresponding threshold. Passing to a subset is harmless in uniqueness arguments, and passing to a superset is harmless in non-uniqueness constructions.

\begin{proof}[Proof of Theorem~\ref{thm:pauli-sharpening}]
We begin by proving part~(i) of the theorem. By thinning $\Lambda$ on each half-line, it is enough to prove the result when $\Lambda_\pm$ are $2$-smooth and $D(\Lambda_\pm,2)>\frac{\mathfrak{c}_1(A)}{2}$. 

Assume that $|f(x)|\not\equiv|g(x)|$ on $\mathbb{R}$. Let 
$F(z):=f(z)\overline{f(\overline{z})}$ and $G(z):=g(z)\overline{g(\overline{z})}$. Clearly, $h_{F-G}(0)\leq -2A\pi$, and since $F(z)-G(z)\not\equiv 0$, we know that $h_{F-G}(0)>-\infty$.

By Fourier inversion, we see that $h_f(\theta),h_g(\theta)\leq\frac{\pi}{A}\sin^2\theta$. Applying Lemma~\ref{lem:indicator-lower-bound-from-zeros} and
\[
\max\{h_{F-G}(\theta),h_{F-G}(-\theta)\}\leq\frac{2\pi}{A}\sin^2\theta,
\]
we see that
\[
D(\Lambda_+,2)\pi\sin(2\theta)+h_{F-G}(0)\cos(2\theta)\leq \frac{2\pi}{A}\sin^2\theta, \quad\text{for all }\theta\in\left[0,\pi/2\right].
\]
Since $h_{F-G}(0)\leq -2A\pi$, we can maximize $D(\Lambda_+,2)$ and get
\[
D(\Lambda_+,2)\leq \begin{cases}
    \frac{1}{A},\quad 0<A<\frac{1}{\sqrt{2}}\\
    2\sqrt{1-A^2},\quad\frac{1}{\sqrt{2}}\leq A< 1.
\end{cases}
\]
	Since $D(\Lambda_+,2)>\frac{\mathfrak{c}_1(A)}{2}$ this is a contradiction. Hence $F-G\equiv 0$, so $|f(x)|=|g(x)|$ for all $x\in\mathbb{R}$.

	We now prove part~(ii) of the theorem. By augmenting $\Lambda$ on each half-line, it is enough to prove the result when $\Lambda_\pm$ are $2$-smooth and $D(\Lambda_\pm,2)<\frac{\mathfrak{c}_1(A)}{2}$. We split this regularized set into two sets $\Lambda_1:=\{\lambda_{2j}\}_{j\in\mathbb{Z}}$ and $\Lambda_2:=\{\lambda_{2j+1}\}_{j\in\mathbb{Z}}$. Then
\[
\max\{D(\Lambda_1,2),D(\Lambda_2,2)\}<\begin{cases}
    \frac{1}{2A},\quad 0<A<\frac{1}{\sqrt{2}}\\
    \sqrt{1-A^2},\quad\frac{1}{\sqrt{2}}\leq A< 1.
\end{cases}
\]
Hence, by the non-uniqueness part of Theorem~\ref{thm:uniqueness-gelfand-shilov}, applied with an auxiliary frequency set of arbitrarily small density and with parameters $(a,b)=(\frac{1}{2A},A)$ when $0<A<\frac{1}{\sqrt{2}}$ and $(a,b)=(A,A)$ when $\frac{1}{\sqrt{2}}\leq A<1$, there exist non-zero functions $\Phi,\Psi\in E(A,A)$ such that $\Phi|_{\Lambda_1}=0$ and $\Psi|_{\Lambda_2}=0$. Since $\Phi(x)\overline{\Psi(x)}$ is not identically zero on $\mathbb R$, we may replace $\Psi$ by $e^{i\vartheta}\Psi$ for a suitable $\vartheta\in\mathbb R$ and assume that
\[
|\Phi(x)+\Psi(x)|\not\equiv|\Phi(x)-\Psi(x)|.
\]
We now let $f=\Phi+\Psi$ and $g=\Phi-\Psi$. Clearly, $|f(\lambda_{2j})|=|\Psi(\lambda_{2j})|=|g(\lambda_{2j})|$ for all $j\in\mathbb{Z}$ and $|f(\lambda_{2j+1})|=|\Phi(\lambda_{2j+1})|=|g(\lambda_{2j+1})|$ for all $j\in\mathbb{Z}$. This completes the proof of part~(ii) of the theorem.
\end{proof}
\subsection{Frequency-matching construction}
The proof of Theorem~\ref{thm:discrete-pauli-frequency-match-nonuniqueness} is quite similar to the proof of the second part of Theorem~\ref{thm:pauli-sharpening}, but we need to make sure that $\hat{\Phi}(\xi)$ and $\hat{\Psi}(\xi)$ are orthogonal for all $\xi\in\mathbb{R}$. To construct these $\Phi$ and $\Psi$, we will use the following result about canonical products.
\begin{theoremx}[Theorem~2, Lecture 12 in~\cite{levinentirefunctions}]\label{thm:canonical-product-estimate}
    If $\Gamma:=(\gamma_n)_{n=0}^\infty$ is a sequence of positive numbers such that the limit $\displaystyle{\Delta:=\lim_{n\to\infty}\frac{n}{\gamma_n}}$ exists, and if
    \[
    \Pi(z):=\prod_{n=0}^\infty\left(1-\frac{z^2}{\gamma_n^2}\right),
    \]
    then $h_\Pi(\theta)=\Delta\pi|\sin\theta|$.
\end{theoremx}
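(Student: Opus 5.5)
We reduce to a canonical product of order $\tfrac12$ in the variable $w=z^2$ and then compute its asymptotics from the zero-counting function. Set
\[
P(w):=\prod_{n=0}^\infty\Bigl(1-\frac{w}{\gamma_n^2}\Bigr),\qquad \Pi(z)=P(z^2).
\]
The zeros of $P$ are the points $t_n=\gamma_n^2>0$. Since $n/\gamma_n\to\Delta$, their counting function satisfies
\[
n_P(t)=\#\{n:\gamma_n^2\le t\}=\Delta\sqrt t+o(\sqrt t).
\]
In particular $\sum t_n^{-1}<\infty$, so $P$ is a genus-zero canonical product of order $\tfrac12$. Writing $w=re^{i\varphi}$ with $\varphi=2\theta\in(0,2\pi)$ and $r=|z|^2$, the claim $h_\Pi(\theta)=\Delta\pi|\sin\theta|$ becomes the order-$\tfrac12$ statement
\[
\log|P(re^{i\varphi})|=\pi\Delta\,\sin(\varphi/2)\,r^{1/2}+o(r^{1/2})
\]
off the positive axis, together with indicator value $0$ in the direction $\varphi=0$. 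Indeed, $\sin(\varphi/2)=|\sin\theta|$ and $r^{1/2}=|z|$.

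To prove this, I would integrate by parts in the Stieltjes integral. For $w\notin[0,\infty)$,
\[
\log P(w)=\int_0^\infty \log\Bigl(1-\frac wt\Bigr)\,dn_P(t)=-\int_0^\infty \frac{w\,n_P(t)}{t(t-w)}\,dt.
\]
I would then substitute $n_P(t)=\Delta\sqrt t+\epsilon(t)\sqrt t$ with $\epsilon(t)\to0$.

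The main term is an explicit integral, which I would evaluate by a contour or Mellin computation:
\[
-w\int_0^\infty \frac{t^{-1/2}}{t-w}\,dt=\pi\,(-w)^{1/2}.
\]
Here the principal branch is taken on $\mathbb C\setminus[0,\infty)$, and this is the classical identity $\int_0^\infty t^{\rho-1}(t+s)^{-1}\,dt=\pi s^{\rho-1}/\sin(\pi\rho)$ at $\rho=\tfrac12$. Taking real parts gives $\Re\,\pi\Delta(-w)^{1/2}=\pi\Delta r^{1/2}\cos\bigl((\varphi-\pi)/2\bigr)=\pi\Delta r^{1/2}\sin(\varphi/2)$, which is the desired main term.

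For the error term, I would split the integral at $t=\eta r$ and $t=r/\eta$. On the ranges $t<\eta r$ and $t>r/\eta$, the crude bound $n_P(t)\le C\sqrt t$ gives a contribution $O(\sqrt\eta)\,r^{1/2}$. On the middle range, $\sup_{t\ge\eta r}|\epsilon(t)|\to0$, and the factor $|w|/|t-w|$ is bounded by $1/\sin(\min\{\varphi,2\pi-\varphi\})$. Hence the error is $o(r^{1/2})$ uniformly in closed sectors $\delta\le\varphi\le2\pi-\delta$. This yields $h_\Pi(\theta)=\Delta\pi|\sin\theta|$ for $\theta\notin\pi\mathbb Z$.

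The main obstacle is the directions $\theta=0,\pi$, where the zeros lie and the pointwise asymptotics fail near each $\gamma_n$. For the upper bound $h_\Pi(0)\le0$ I would use that the indicator is $2$-trigonometrically convex (Theorem~\ref{thm:levin-indicator}), hence continuous. Since $\Pi$ has order $2$ and finite type, letting $\theta\to0$ in the formula already established gives $h_\Pi(0)=\lim_{\theta\to0}\Delta\pi|\sin\theta|=0$. Continuity also gives the lower bound, since the indicator is a limsup and cannot jump below the neighbouring values. If I want a direct argument as a cross-check, I would estimate $\log|P(r)|$ on circles avoiding small discs around the zeros, and use the fact that the indicator only sees the limsup, so exceptional discs of relative measure zero do not affect it. The evenness of $\Pi$ then transfers the result from $\theta$ to $\theta+\pi$.
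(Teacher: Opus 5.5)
The paper gives no proof of this statement; it imports it from Levin. Your argument is the standard proof and is correct: write $\Pi(z)=P(z^2)$ with $P$ a genus-zero product of order $\tfrac12$, integrate by parts against $n_P(t)=\Delta\sqrt t+o(\sqrt t)$, evaluate the main term as $\pi\Delta(-w)^{1/2}$, and split the error at $\eta r$ and $r/\eta$. Two slips need correcting.

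First, $\Pi$ has order $1$ (it is of exponential type), not order $2$, and the indicator in the statement is the order-$1$ indicator. The continuity you invoke at $\theta=0,\pi$ therefore comes from Theorem~\ref{thm:levin-indicator} with $p=1$ (ordinary trigonometric convexity). The order-$2$ indicator of $\Pi$ vanishes identically and gives no information. With this fix, continuity alone yields $h_\Pi(0)=\lim_{\theta\to0}\Delta\pi|\sin\theta|=0$. Your extra remark that a limsup ``cannot jump below the neighbouring values'' is not a valid reason in general, and it is not needed.

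Second, the bound $|w|/|t-w|\le 1/\sin(\min\{\varphi,2\pi-\varphi\})$ degenerates at $\varphi=\pi$, which lies inside your sector. Use instead that $\mathrm{dist}(w,[0,\infty))\ge r\sin\delta$ for $\delta\le\varphi\le 2\pi-\delta$ with $\delta<\pi/2$. This gives $|w|/|t-w|\le 1/\sin\delta$ uniformly on the closed sector, and the rest of your error estimate goes through unchanged.
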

We are now ready to prove Theorem~\ref{thm:discrete-pauli-frequency-match-nonuniqueness}.
\begin{proof}[Proof of Theorem~\ref{thm:discrete-pauli-frequency-match-nonuniqueness}]
    If $A\geq\frac{\sqrt{3}}{2}$, then $\mathfrak{c}_1(A)\leq 2$, hence by the non-uniqueness part of Theorem~\ref{thm:kns-uniqueness}, we can pick $g(x):=0$ and a non-trivial function $f\in \mathcal{H}$ such that $f|_\Lambda=0$ and $\hat{f}|_M=0$. Thus we can assume $0<A<\frac{\sqrt{3}}{2}$. By augmenting $\Lambda$, it is enough to construct the pair for a $2$-smooth superset with density $D(\Lambda,2)<\frac{\max\{\mathfrak{c}_1(A),2\}}{2}$. 
    
    Let
    \[
    \Pi_1(z):=\prod_{n=0}^\infty\left(1-\frac{z^2}{\lambda_{2n}^4}\right)\quad\text{and}\quad\Pi_2(z):=\prod_{n=0}^\infty\left(1-\frac{z^2}{\lambda_{2n+1}^4}\right).
    \]
    By Theorem~\ref{thm:canonical-product-estimate}, we know that $\Pi_1$ and $\Pi_2$ converge and have Phragm\'en--Lindel\"of indicator
    \[
    h_{\Pi_1}(\theta),h_{\Pi_2}(\theta)=\frac{D(\Lambda,2)}{2}\pi|\sin(\theta)|.
    \]
    We can now define the following functions
    \[
    \Phi(z):=e^{-(\sigma_A+\varepsilon)\pi z^2}\Pi_1(z^2)\quad\text{and}\quad \Psi(z):=e^{-(\sigma_A+\varepsilon)\pi z^2}z\Pi_2(z^2)
    \]
    where 
    \[
    \sigma_A:=\begin{cases}
        \frac{1}{2A},\quad&0<A<\frac{1}{\sqrt{2}},\\
        A,\quad& \frac{1}{\sqrt{2}}\leq A<\frac{\sqrt{3}}{2}
    \end{cases}
    \]
    and $\varepsilon>0$ is chosen sufficiently small such that
    \[
    \frac{D(\Lambda,2)}{2}<\sqrt{(\sigma_A+\varepsilon)\left(\frac{1}{A}-(\sigma_A+\varepsilon)\right)}.
    \]
    Both $\Phi$ and $\Psi$ have Phragm\'en--Lindel\"of indicator
    \[
    h_\Phi(\theta),h_\Psi(\theta)=-\pi(\sigma_A+\varepsilon)\cos(2\theta)+\frac{D(\Lambda,2)}{2}\pi|\sin(2\theta)|.
    \]
    Clearly $\sigma_A+\varepsilon>A$ and by Lemma~\ref{lem:fourier-decay-bound}, we know that $\max\{h_{\hat{\Phi}}(0),h_{\hat{\Phi}}(\pi),h_{\hat{\Psi}}(0),h_{\hat{\Psi}}(\pi)\}<-A\pi$, so $\Phi,\Psi\in E(A,A)$. 
    
    Consider now the functions 
    \[
    f(z):=\Phi(z)+\Psi(z)\quad\text{and}\quad g(z):=\Phi(z)-\Psi(z).
    \]
    Clearly, $f,g\in E(A,A)$ since $\Phi,\Psi\in E(A,A)$.
    
    Since $\Phi(x)$ is even and real-valued, we know that $\hat{\Phi}(\xi)$ is real-valued. Similarly, since $\Psi(x)$ is odd and real-valued, we know that $\hat{\Psi}(\xi)$ is imaginary-valued. Because of this pointwise orthogonality, we know that
    \[
    |\hat{f}(\xi)|^2=|\hat{\Phi}(\xi)+\hat{\Psi}(\xi)|^2=|\hat{\Phi}(\xi)-\hat{\Psi}(\xi)|^2=|\hat{g}(\xi)|^2\quad\text{for all }\xi\in\mathbb{R}.
    \]
    Additionally, for all $\lambda\in\Lambda$, we have $\Phi(\pm\lambda)=0$ or $\Psi(\pm\lambda)=0$. Hence, for all $\lambda\in\Lambda$, we have 
    \[
|f(\pm\lambda)|=|\Phi(\pm\lambda)|=|g(\pm\lambda)|\quad\text{or}\quad|f(\pm\lambda)|=|\Psi(\pm\lambda)|=|g(\pm\lambda)|.
    \]
    This means that $|f(\pm\lambda)|=|g(\pm\lambda)|$ for all $\lambda\in\Lambda$. Since both $\Phi$ and $\Psi$ are real-valued and non-zero, we also know that $|f(x)|\not\equiv|g(x)|$.
\end{proof}

\subsection{Proof of the weak Pauli pair theorem}
We will prove the theorem in the following way. First, the sampled equality of moduli gives zeros of the two entire functions $H$ and $\tilde{H}$. Second, indicator lower bounds convert these zeros into density inequalities. Finally, optimizing those inequalities over the possible decay parameters gives $\mathfrak{c}_2(A)$.

\begin{proof}[Proof of Theorem~\ref{thm:pauli-non-uniqueness}]
We begin by proving part~(i) of the theorem. By thinning $\Lambda$ and $M$ on each half-line, it is enough to prove the result when $\Lambda_+,\Lambda_-,M_+,M_-$ are $2$-smooth and have densities larger than $\frac{\mathfrak{c}_2(A)}{2}$. By Theorem~\ref{thm:discrete-beurling}, we know that $h_f(0),h_f(\pi),h_{\hat{f}}(0),h_{\hat{f}}(\pi)\leq-A\pi$. Let
\[
u(z):=f(z)-g(z)\quad\text{and}\quad v(z):=f(z)+g(z).
\]
If $u\equiv0$ or $v\equiv0$, then $f$ and $g$ form a Pauli pair. Hence we may assume that both $u$ and $v$ are non-zero.
We now define the entire function
\[
H(z):=\frac{1}{2}\left(u(z)\overline{v(\bar z)}+\overline{u(\bar z)}v(z)\right)=f(z)\overline{f(\bar z)}-g(z)\overline{g(\bar z)}.
\]
We now let $a_1:=-\pi^{-1}\max\{h_u(0),h_u(\pi)\}$ and $b_1:=-\pi^{-1}\max\{h_{\hat{u}}(0),h_{\hat u}(\pi)\}$. Similarly, $a_2:=-\pi^{-1}\max\{h_v(0),h_v(\pi)\}$ and $b_2:=-\pi^{-1}\max\{h_{\hat v}(0),h_{\hat v}(\pi)\}$. Then
\[
h_H(\theta)\leq\max\{h_u(\theta)+h_v(-\theta),h_u(-\theta)+h_v(\theta)\}\quad\text{for all $\theta$.}
\]
Hence $h_H(0)\leq -(a_1+a_2)\pi$ and
\[
D(\Lambda_+,2)\pi\sin(2\theta)+h_H(0)\cos(2\theta)\leq\left(\frac{1}{b_1}+\frac{1}{b_2}\right)\pi\sin^2\theta\quad\text{for all }\theta\in[0,\pi/2].
\]
Doing the same optimization as in the proof of Theorem~\ref{thm:uniqueness-gelfand-shilov}(i), we have
\begin{equation}\label{eq:time-density-bound}
D(\Lambda_+,2)\leq\sqrt{(a_1+a_2)\left(\frac{1}{b_1}+\frac{1}{b_2}-(a_1+a_2)\right)}
\end{equation}
if $H\not\equiv0$. Similarly, we can define the entire function
\[
\tilde{H}(z):=\frac{1}{2}\left(\hat{u}(z)\overline{\hat{v}(\bar z)}+\overline{\hat{u}(\bar z)}\hat{v}(z)\right)=\hat{f}(z)\overline{\hat{f}(\bar z)}-\hat{g}(z)\overline{\hat{g}(\bar z)}
\]
and by the same argument we have
\begin{equation}\label{eq:frequency-density-bound}
D(M_+,2)\leq\sqrt{(b_1+b_2)\left(\frac{1}{a_1}+\frac{1}{a_2}-(b_1+b_2)\right)}
\end{equation}
if $\tilde{H}\not\equiv0$. 
Let $S$ and $T$ denote the right-hand side of~\eqref{eq:time-density-bound} and~\eqref{eq:frequency-density-bound} respectively. Assuming $H\not\equiv0$ and $\tilde{H}\not\equiv 0$, we have $\left(\frac{\mathfrak{c}_2(A)}{2}\right)^2<\min\{S^2,T^2\}$. 

We now wish to upper bound $B:=\min\{S^2,T^2\}$. Let $s:=a_1+a_2$, $t:=b_1+b_2$, $\eta:=a_1a_2$, $\nu:=b_1 b_2$, and $x:=\sqrt{\eta\nu}$. Plugging these into the definition of $S$ and $T$, we have
\begin{equation}\label{eq:st-density-parameterization}
    S^2=s\left(\frac{t}{\nu}-s\right)\quad\text{and}\quad T^2=t\left(\frac{s}{\eta}-t\right).
\end{equation}
From~\eqref{eq:st-density-parameterization}, we get $\nu\left(s^2+B\right)\leq st$ and $\eta\left(t^2+B\right)\leq st.$
Adding these together and applying Young's inequality, we see that
\begin{align*}
    2st\geq (\nu+\eta)B+\nu s^2+\eta t^2
    &\geq (\nu+\eta)B+2st\sqrt{\nu\eta}= (\nu+\eta)B+2stx.
\end{align*}
Rearranging, we get $B\leq \frac{2st(1-x)}{\nu+\eta}$. We now want to show that
\[
\frac{2st}{\nu+\eta}\leq\left(\frac{A}{\sqrt{x}}+\frac{\sqrt{x}}{A}\right)^2.
\]
Assume without loss of generality that $a_1\leq a_2$. Then since $a_1+\frac{\eta}{a_1}$ is non-increasing for all $a_1\in[A,\sqrt{\eta}]$, we have $s=a_1+a_2=a_1+\frac{\eta}{a_1}\leq A+\frac{\eta}{A}$. Similarly, we also have $t\leq A+\frac{\nu}{A}$. This means that
\begin{align*}
    \frac{2st}{\eta+\nu}&\leq\frac{2(A+\frac{\eta}{A})(A+\frac{\nu}{A})}{\nu+\eta}=2+\frac{2A^2+\frac{2x^2}{A^2}}{\nu+\eta}.
\end{align*}
Since the arithmetic mean of two non-negative numbers is always greater than their geometric mean, we know that $\nu+\eta\geq 2\sqrt{\eta\nu}=2x$. Hence,
\begin{align*}
    2+\frac{2A^2+\frac{2x^2}{A^2}}{\nu+\eta}\leq 2+\frac{A^2}{x}+\frac{x}{A^2}=\left(\frac{A}{\sqrt{x}}+\frac{\sqrt{x}}{A}\right)^2.
\end{align*}
Consequently, we know that
\begin{equation}\label{eq:case-one-density-upper-bound}
    \left(\frac{\mathfrak{c}_2(A)}{2}\right)^2<B\leq \frac{2st(1-x)}{\nu+\eta}\leq(1-x)\left(\frac{A}{\sqrt{x}}+\frac{\sqrt{x}}{A}\right)^2.
\end{equation}
Since $x\in[A^2,1]$, we know that the right-hand side of~\eqref{eq:case-one-density-upper-bound} is maximized when $x=x_A$ where
\[
x_A:=\begin{cases}
    \frac{1+\sqrt{1-8A^2}}{4},\quad &0<A<\frac{1}{3},\\
    A^2,\quad &\frac{1}{3}\leq A<1.
\end{cases}
\]
Plugging this into~\eqref{eq:case-one-density-upper-bound}, we get $\left(\frac{\mathfrak{c}_2(A)}{2}\right)^2<\left(\frac{\mathfrak{c}_2(A)}{2}\right)^2$ which is a contradiction. Hence, $H\equiv0$ or $\tilde{H}\equiv 0$.

We now move on to part~(ii) of the theorem. By augmenting $\Lambda$ and $M$ on each half-line, it is enough to construct the pair when $\Lambda_+,\Lambda_-,M_+,M_-$ are $2$-smooth and have densities smaller than $\frac{\mathfrak{c}_2(A)}{2}$. If $A\geq\frac{\sqrt{3}}{2}$, then $\mathfrak{c}_2(A)=2$, hence by the non-uniqueness part of Theorem~\ref{thm:kns-uniqueness}, we can pick $g(x):=0$ and a non-trivial function $f\in \mathcal{H}$ such that $f|_\Lambda=0$ and $\hat{f}|_M=0$. Thus we can assume $0<A<\frac{\sqrt{3}}{2}$. 

We can now choose $a_1,b_2=A$ and $a_2,b_2=
\frac{x_A}{A}$. This means that
\begin{align*}
    \sqrt{a_1\left(\frac{1}{b_1}-a_1\right)}+\sqrt{a_2\left(\frac{1}{b_2}-a_2\right)}&=\sqrt{b_1\left(\frac{1}{a_1}-b_1\right)}+\sqrt{b_2\left(\frac{1}{a_2}-b_2\right)}\\
    &=\begin{cases}
            \sqrt{\frac{(3-\sqrt{1-8A^2})^3}{8(1-\sqrt{1-8A^2})}},\quad &0<A<\frac{1}{3}\\
            2\sqrt{1-A^2},\quad &\frac{1}{3}\leq A<\frac{\sqrt{3}}{2}.
    \end{cases}
\end{align*}
We now know by the non-uniqueness part of Theorem~\ref{thm:uniqueness-gelfand-shilov} that there exist $\Lambda_1,M_1,\Lambda_2,M_2\subset\mathbb{R}$ such that $\Lambda=\Lambda_1\cup\Lambda_2$, $M=M_1\cup M_2$, and there exist non-zero functions $\Phi,\Psi\in E(A,A)$ such that $\Phi|_{\Lambda_1}=0$, $\hat{\Phi}|_{M_1}=0$, $\Psi|_{\Lambda_2}=0$ and $\hat{\Psi}|_{M_2}=0$.

Since $\Phi(x)\overline{\Psi(x)}$ and $\hat{\Phi}(\xi)\overline{\hat{\Psi}(\xi)}$ are not identically zero on $\mathbb R$, we may replace $\Psi$ by $e^{i\vartheta}\Psi$ for a suitable $\vartheta\in\mathbb R$ and assume that both
\[
|\Phi(x)+\Psi(x)|\not\equiv|\Phi(x)-\Psi(x)|\quad\text{and}\quad|\hat{\Phi}(\xi)+\hat{\Psi}(\xi)|\not\equiv|\hat{\Phi}(\xi)-\hat{\Psi}(\xi)|,
\]
hold. We now let $f=\Phi+\Psi$ and $g=\Phi-\Psi$. Clearly, $|f(\lambda)|=|g(\lambda)|$ for all $\lambda\in\Lambda$ and $|\hat{f}(\mu)|=|\hat{g}(\mu)|$ for all $\mu\in M$, while $|f(x)|\not\equiv|g(x)|$ and $|\hat{f}(\xi)|\not\equiv|\hat{g}(\xi)|$. This completes the proof of part~(ii) of the theorem.
\end{proof}

\section*{Acknowledgments}
The author thanks Kristian Seip for helpful comments and feedback on the exposition.

\bibliographystyle{plain}
\bibliography{references}
\end{document}